\theoremstyle{thmstyleone}%
\newtheorem{theorem}{Theorem}
\theoremstyle{thmstyletwo}%
\theoremstyle{thmstylethree}%
\def\<{{\langle}}
\def\>{{\rangle}}
\def\R{{\mathbb{R}}}
\def\N{{\mathbb{N}}}
\def\W{{\mathbf{W}}}
\def\F{{\mathcal{F}}}
\newcommand{\bs}[1]{\boldsymbol{#1}}
\newcommand{\bit}{\begin{itemize}}
\newcommand{\eit}{\end{itemize}}
\newcommand{\ipar}{m}
\newcommand{\ParSpace}{\mathcal{M}}
\newcommand{\bParSpace}{\mathcal{M}_n}
\newcommand{\mupr}{\mu_{\text{pr}}}
\newcommand{\mupost}{\mu_{\text{post}}}
\newcommand{\Cpr}{\mathcal{C}_{\text{pr}}}
\newcommand{\pilike}{\pi_{\text{like}}}
\newcommand{\Gnoise}{\mathbf{\Gamma}_{\text{n}}}
\newcommand{\obs}{\mathbf{y}}
\newcommand{\ObsSpace}{\mathcal{Y}}
\newcommand{\obsn}{\boldsymbol{\varepsilon}}
\newcommand{\ave}{\mathbb{E}}
\newcommand{\bipar}{\mathbf{\ipar}}
\newcommand{\bF}{\mathbf{F}}
\newcommand{\bmupr}{\mathbf{m}_{\text{pr}}}
\newcommand{\bCpr}{\mathbf{\Gamma}_{\text{pr}}}
\def\mpr{{m_{\text{pr}}}}
\newcommand{\kw}[1]{{\color{black}{#1}}}
\newcommand{\tom}[1]{{\color{black}{#1}}}
\DeclareMathOperator*{\argmax}{arg\,max}
\newcommand\blfootnote[1]{%
  \begingroup
  \renewcommand\thefootnote{}\footnote{#1}%
  \addtocounter{footnote}{-1}%
  \endgroup
}
\begin{document}

\title[
  Large-scale Bayesian optimal experimental design with DIPNet]{\kw{Large-scale Bayesian optimal experimental design with derivative-informed projected neural network}}

\author*[1]{\fnm{Keyi} \sur{Wu}}\email{keyiwu.w@gmail.com}

\author[2]{\fnm{Thomas} \sur{O'Leary-Roseberry}}\email{tom.olearyroseberry@utexas.edu}

\author[3]{\fnm{Peng} \sur{Chen}}\email{peng@cc.gatech.edu}

\author[2,4]{\fnm{Omar} \sur{Ghattas} \blfootnote{This research was partially funded by DOE ASCR 
  DE-SC0019303 and DE-SC0021239, DOD MURI FA9550-21-1-0084, 
  and NSF DMS-2012453.}}\email{omar@oden.utexas.edu}  

\affil*[1]{\orgdiv{Department of Mathematics}, \orgname{The University
    of Texas at Austin}, \orgaddress{\street{2515 Speedway},
    \city{Austin}, \state{TX} \postcode{78712}, \country{USA}}}

\affil[2]{\orgdiv{Oden Institute for Computational Engineering and
    Sciences}, \orgname{The University of Texas at Austin},
  \orgaddress{\street{201 E 24th St}, \city{Austin}, \postcode{78712},
    \state{TX}, \country{USA}}}

\affil[3]{\orgdiv{School of Computational Science and Engineering}, \orgname{Georgia Institute of Technology},
  \orgaddress{ \city{Atlanta}, \postcode{30308},
    \state{GA}, \country{USA}}}

\affil[4]{\orgdiv{Departments of Geological Sciences and Mechanical
    Engineering}, \orgname{The University of Texas at Austin},
  \orgaddress{
\city{Austin}, \postcode{78712}, \state{TX}, \country{USA}}}


\abstract{We address the solution of large-scale Bayesian optimal
  experimental design (OED) problems governed by partial differential
  equations (PDEs) with infinite-dimensional parameter fields.  The
  OED problem seeks to find sensor locations that maximize the
  expected information gain (EIG) in the solution of the underlying
  Bayesian inverse problem. Computation of the EIG is usually
  prohibitive for PDE-based OED problems. To make the evaluation of
  the EIG tractable, we approximate the (PDE-based)
  parameter-to-observable map with a derivative-informed projected
  neural network (DIPNet) surrogate, which exploits the geometry,
  smoothness, and intrinsic low-dimensionality of the map using a
  small and dimension-independent number of PDE solves. The surrogate
  is then deployed within a greedy algorithm-based solution of the OED
  problem such that no further PDE solves are required. We analyze the
  EIG approximation error in terms of the generalization error of the
  DIPNet \tom{and show they are of the same order.} Finally, the efficiency
  and accuracy of the method are demonstrated via numerical
  experiments \tom{on OED problems governed by inverse scattering and
  inverse reactive transport with up to 16,641 uncertain parameters
  and 100 experimental design variables, where we observe up to three
  orders of magnitude speedup relative to a reference double loop Monte
  Carlo method. }}

\keywords{Bayesian optimal experimental design, Bayesian inverse
  problem, expected information gain, neural network, surrogate, greedy algorithm}



\maketitle

\section{Introduction}\label{sec:intro}

In modeling natural or engineered systems, uncertainties are often
present due to the lack of knowledge or intrinsic variability of the
system. Uncertainties may arise from sources as varied as initial and
boundary conditions, material properties and other coefficients,
external source terms, interaction and coupling terms, and geometries;
for simplicity of exposition, we refer to all of these as {\em
  parameters}.  Uncertainties in prior knowledge of these parameters
can be reduced by incorporating indirect observational or experimental
data on the system state or related quantities of interest into the
forward model via solution of a Bayesian inverse problem. 
Prior knowledge can be incorporated through a prior distribution on
the uncertain parameters. The data are typically noisy because of
limited measurement precision, which induces a likelihood of the data
conditioned on the given parameters. Uncertainties of the parameters
are then reduced by the data and quantified by a posterior
distribution, which is a joint distribution of the prior and the
likelihood, given by Bayes' rule.


Large amounts of informative data can reduce  uncertainties in the
parameters, and thus posterior predictions, significantly. However,
the data are often sparse or limited due
to the cost of their acquisition. 
In such cases it is critical to design the acquisition process or
experiment in an optimal way so that as much information as possible
can be gained from the acquired data, or the uncertainty in the
parameters or posterior predictions can be reduced as much as
possible. Experimental design variables can include what, when, and
where to measure,  which sources to use to excite the system, and
under which conditions should the experiments be conducted. 
This is known as the optimal experimental design (OED) problem
\cite{Ucinski05}, or
Bayesian OED in the context of Bayesian inference. OED problems arise
across numerous fields including geophysical exploration, medical
imaging, nondestructive evaluation, drug testing, materials
characterization, and earth system data assimilation, to name just a few.
For example, two notable uses of OED include
optimal observing system design in oceanography
\cite{LooseHeimbach21} and optimal sensor placement for tsunami early warning
 \cite{FerrolinoLopeMendoza20}. 

The challenges to solving OED problems in these and other fields are
manifold. The models underlying the systems of interest typically take
the form of partial differential  
equations (PDEs) and can be large-scale, complex, nonlinear, dynamic,
multiscale, and coupled. 
The uncertain parameters may depend on both space and time, and are
often characterized by infinite-dimensional random fields and/or
stochastic processes. The PDE models can be extremely expensive to
solve for each realization of the infinite-dimensional uncertain
parameters. The computation of the OED objective involves
high-dimensional (after discretization) integration with respect to
(w.r.t.) the uncertain parameters, and thus require a large number of
PDE solves. Finally, the OED objective will need to be evaluated
numerous times, especially when the experimental design variables are
high-dimensional or when they represent discrete decisions.

To address these computational challenges, different classes of
methods have been developed by exploiting (1) sparsity via polynomial
chaos approximation of parameter-to-observable  maps
\cite{HuanMarzouk13,HuanMarzouk14,HuanMarzouk16},
(2) Laplace approximation of the posterior
\cite{AlexanderianPetraStadlerEtAl16,
BeckDiaEspathEtAl18, LongScavinoTemponeEtAl13a, LongMotamedTempone15,
BeckMansourDiaEspathEtAl20}, (3) intrinsic low
dimensionality by low-rank approximation of (prior-preconditioned and
data-informed) operators \cite{AlexanderianPetraStadlerEtAl14,
  AlexanderianGloorGhattas16,AlexanderianPetraStadlerEtAl16,
  SaibabaAlexanderianIpsen17,CrestelAlexanderianStadlerEtAl17,AttiaAlexanderianSaibaba18}, 
(4) decomposibility by offline (for PDE-constrained
approximation)--online (for design optimization) decomposition
\cite{WuChenGhattas20,WuChenGhattas21}, and (5) surrogate models of
the PDEs, parameter-to-observable map, or posterior distribution by model
reduction \cite{Aretz-NellesenChenGreplEtAl20, AretzChenVeroy21} 
and deep learning \cite{FosterJankowiakBinghamEtAl19,
  KleinegesseGutmann20,ShenHuan21}. 

Here, we consider the Bayesian OED problem for optimal 
sensor placement governed by large-scale and possibly
nonlinear PDEs with infinite-dimensional uncertain parameters. We use
the expected information gain (EIG), also known as mutual information,
as the optimality criterion for the OED problem. The optimization
problem is combinatorial: we seek the combination of sensors, selected
from a set of candidate locations, that maximizes the EIG. 
The EIG is an average of the Kullback--Leibler (KL) divergence between
the posterior and the prior distributions over all realizations of the
data. This involves a double integral: one integral of the likelihood
function w.r.t.\ the prior distribution to compute the normalization
constant or model evidence for each data realization, and one integral
w.r.t.\ the data distribution. To evaluate the two integrals we adopt
a double-loop Monte Carlo (DLMC) method that requires the computation
of the parameter-to-observable map at each of the parameter and data
samples. Since the likelihood can be rather complex and highly  locally
supported in the parameter space, the number of parameter samples from
the prior distribution needed to capture the
likelihood well with relatively accurate sample average approximation of
the normalization constant can be extremely large. 
The requirement to evaluate the
PDE-constrained parameter-to-observable map at each of the large
number of samples leads to numerous PDE solves, which is prohibitive
when the PDEs are expensive to solve. 
To tackle this challenge, we construct a derivative-informed projected
neural network (DIPNet)
\cite{OLearyRoseberryVillaChenEtAl2022,OLearyRoseberry2020,OLearyRoseberryDuChaudhuriEtAl2021}
surrogate of the parameter-to-observable map that exploits the
intrinsic low dimensionality of both the parameter and the data
spaces. This intrinsic low dimensionality is due to the correlation of
the high-dimensional parameters, the smoothing property of the
underlying PDE solution, and redundant information contained in the
data from all of the candidate sensors. In particular, the
low-dimensional subspace of the parameter space can be detected via
low rank approximations of derivatives of the parameter-to-observable
map, such as the 
Jacobian, Gauss-Newton Hessian, or higher-order derivatives. \tom{This
property has been observed and exploited across a wide spectrum of
Bayesian inverse problems
\cite{FlathWilcoxAkcelikEtAl11,Bui-ThanhBursteddeGhattasEtAl12,
  Bui-ThanhGhattasMartinEtAl13,Bui-ThanhGhattas14,KalmikovHeimbach14,HesseStadler14,IsaacPetraStadlerEtAl15,CuiLawMarzouk16,ChenVillaGhattas17,BeskosGirolamiLanEtAl17,ZahmCuiLawEtAl18,BrennanBigoniZahmEtAl20,ChenWuChenEtAl19,ChenGhattas20,SubramanianScheufeleMehlEtAl20,BabaniyiNicholsonVillaEtAl21}
and Bayesian optimal experimental design
\cite{AlexanderianPetraStadlerEtAl16,WuChenGhattas20,WuChenGhattas21}. 
See
\cite{GhattasWillcox21} for analysis of model elliptic, parabolic, and
hyperbolic problems, and a lengthy list of complex inverse problems that have
been found numerically to exhibit this property. }

This intrinsic low-dimensionality of parameter and data spaces, along
with smoothness of the parameter-to-observable map, allow us to
construct an accurate (over parameter space) DIPNet surrogate with a
limited and dimension-independent number of training data pairs, each
requiring a PDE solve. Once trained, the DIPNet surrogate 
is deployed in the OED problem, which is solved without further PDE
solution, resulting in very large reductions in computing time. 
Under suitable assumptions, we provide an analysis of the error
propagated from the DIPNet approximation to the approximation of the
normalization constant and the EIG. To solve the combinatorial
optimization problem of sensor selection, we use a greedy algorithm
developed in our previous work
\cite{WuChenGhattas20,WuChenGhattas21}. We demonstrate the efficiency
and accuracy of our computational method by conducting two numerical
experiments with infinite-dimensional parameter fields: OED for inverse
scattering (with an acoustic Helmholtz
forward problem) and inverse reactive transport (with a nonlinear
advection-diffusion-reaction forward problem).

The rest of the paper is organized as follows. The setup of the
problems including Bayesian inversion, EIG, sensor design matrix, and
Bayesian OED are presented in Section \ref{sec:bg}. Section
\ref{sec:method} is devoted to presentation of the computational
methods including DLMC, DIPNet and its induced error analysis, and
the greedy optimization algorithm. Results for the
two OED numerical experiments are provided in Section \ref{sec:numerical}, followed
by conclusions in Section \ref{sec:conclusions}.

\section{Problem setup}\label{sec:bg}


\subsection{Bayesian inverse problems}\label{sec:bayesian}
Let $\mathcal{D} \subset \mathbb{R}^{n_x}$ denote a physical domain in
dimension $n_x = 1,2,3$.  We consider the problem of inferring an
uncertain parameter field $\ipar$ defined in the physical domain
$\mathcal{D}$ from noisy data $\obs$ and a complex model represented by
PDEs. Let $\obs \in \R^{n_y}$ denote the noisy data vector of dimension $n_y \in \N$, given by
\begin{equation}
    \obs = \mathcal{F}(m) + \obsn,
\end{equation}
which is contaminated by the additive Gaussian noise $\obsn \sim \mathcal{N}(\bs{0},\Gnoise)$ with zero mean and covariance $\Gnoise \subset \R^{n_y\times n_y}$. Specifically,  
$\obs$ is obtained from observation of the solution of the PDE model at $n_y$ sensor locations. $\mathcal{F}$ is the parameter-to-observable map which depends on the solution of the PDE and an observation operator that extracts the solution values at the $n_y$ locations. 

We consider the above inverse problem in a Bayesian framework. First, we assume that $m$ lies in an infinite-dimensional real separable Hilbert space $\ParSpace$, e.g., $\ParSpace = \mathcal{L}^2(\mathcal{D})$ of square integrable functions defined in $\mathcal{D}$.
Moreover, we assume that $m$ follows a Gaussian prior measure $ \mupr = \mathcal{N}(\mpr,\Cpr)$ with mean $\mpr \in \ParSpace$ and covariance operator $\Cpr$, a strictly positive, self-adjoint, and trace-class operator. As one example, we consider $\Cpr = \mathcal{A}^{-2}$, where $\mathcal{A} = -\gamma \Delta + \delta I$ is a Laplacian-like operator with prescribed homogeneous Neumann boundary condition, with Laplacian $\Delta$, identity $I$, and positive constants $\gamma, \delta > 0$; see \cite{Stuart10,Bui-ThanhGhattasMartinEtAl13,PetraMartinStadlerEtAl14} for more details. Given the Gaussian observation noise, the likelihood of the data $\obs$ for the parameter $m \in \ParSpace$ satisfies 
\begin{equation}
\label{eq:likelihood}
\pilike(\obs \vert \ipar) \propto \exp\left( -  \Phi(\ipar, \obs) \right),
\end{equation}
where 
\begin{equation}
\Phi(\ipar, \obs) := \frac{1}{2} \|\obs - \F(\ipar)\|^2_{\Gnoise^{-1}}
\end{equation}
is known as a potential function. By Bayes' rule, 
the posterior measure, denoted as $\mupost (\ipar \vert  \obs)$, is given by the Radon-Nikodym derivative as
\begin{equation}\label{eq:Bayes}
\frac{d \mupost (\ipar \vert \obs) }{d \mupr(\ipar)} = \frac{1}{\pi(\obs)}\pilike(\obs\vert \ipar),
\end{equation}
where $\pi(\obs)$ is the so-called normalization constant or model evidence, given by 
\begin{equation}
\pi(\obs) = \int_\mathcal{M} \pilike(\obs\vert\ipar) d \mupr(\ipar).
\end{equation}
This expression is often computationally intractable because of the infinite-dimensional integral, which involves a (possibly large-scale) PDE solve for each realization $m$.


\subsection{Expected information gain} \label{sec:eig}
To measure the information gained from the data $\obs$ in the inference of the parameter $m$, 
we consider a Kullback--Leibler (KL) divergence between the posterior and the prior, defined as 
\begin{equation}\label{eq:KL}
D_{\text{KL}}(\mupost (\cdot \vert \obs) \| \mupr ) := \int_{\ParSpace} \ln\left(
\frac{d\mupost(\ipar \vert \obs)}{d\mupr(\ipar)}\right) d\mupost(\ipar \vert \obs),
\end{equation}
which is random since the data $\obs$ is random. We consider a widely used optimality criterion,
expected information gain (EIG), which is the KL divergence averaged over all realizations of the data, defined as 
\begin{equation}\label{eq:EIG}
\begin{split}
\Psi 
&:= \ave_\obs \left[D_{\text{KL}}(\mupost(\cdot \vert
                 \obs)\| \mupr)\right] \\  
& = \int_{\ObsSpace} D_{\text{KL}}(\mupost(\cdot \vert \obs) \| \mupr )
       \, \pi(\obs)\, d\obs \\ 
& =\int_{\ObsSpace} \int_{\ParSpace} \ln\left(
\frac{d\mupost(\ipar \vert \obs)}{d\mupr(\ipar)}\right) d\mupost(\ipar \vert \obs)\,\pi(\obs)\,  d\obs\\
& = \int_{\ParSpace} \int_{\ObsSpace} \ln\left(
\frac{\pilike(\obs \vert \ipar )}{\pi(\obs)}\right) \pilike(\obs \vert \ipar )\,d \obs \, d\mupr(\ipar),
\end{split}
\end{equation}
where the last equality follows Bayes' rule \eqref{eq:Bayes} and the Fubini theorem under the assumption of proper integrability.


\subsection{Optimal experimental design} \label{sec:oed}

We consider the OED problem for optimally acquiring data 
to maximize the expected information gained in the parameter inference. The experimental of design seeks 
to choose $r$ sensor locations out of $d$ candidates $\{\bs{x}_1,\dots,\bs{x}_d \}$ 
represented by a design matrix $\W \in \R^{r\times d} \in \mathcal{W}$,
namely, if the $i$-th sensor is placed at $\bs{x}_j$, then $\W_{ij} = 1$, otherwise $\W_{ij} = 0$:
\begin{equation}
   \mathcal{W} := \left\{ \W \in \R^{r \times d}: \W_{ij} \in \{0,1 \}, \sum^r_{i=1} \W_{ij} \in \{0,1 \}, \sum^d_{j=1} \W_{ij} = 1 \right\}.
\end{equation}
Let $\F_d: \ParSpace \mapsto \R^{d}$ denote the parameter-to-observable map and $\obsn_d \in \R^d$ denote the additive noise, both using all $d$ candidate sensors; then we have 
\begin{equation}
\label{eq:WF}
    \F = \W \F_d \quad \text{and} \quad \obsn = \W \obsn_d.
\end{equation}
Then the likelihood (\ref{eq:likelihood}) for a specific design $\W$ is given by 
\begin{equation}
\label{eq:likeliW}
\pilike(\obs \vert \ipar,\W) \propto \exp\left( -  \frac{1}{2} \| \obs - \W\F_d(\ipar)\|^2_{\Gnoise^{-1}} \right),
\end{equation}
and the normalization constant also depends on $\W$ as
\begin{equation}
   \pi(\obs, \W) = \int_{\ObsSpace}  \pilike(\obs \vert m, \W) d \mupr(\ipar).
\end{equation}

From Section \ref{sec:eig}, we can see that the EIG $\Psi$ depends on the design matrix $\W$ through the likelihood function $\pilike(\obs \vert m,\W)$. To this end, we formulate the OED problem to find an optimal design matrix $\W^*$ such that
\begin{equation}\label{eq:OED}
   \W^* =  \argmax_{\W \in \mathcal{W}} \Psi(\W),
\end{equation}
with the $\W$-dependent EIG given by 
\begin{equation}
    \Psi(\W)
   = \int_{\ParSpace} \int_{\ObsSpace} \ln\left(
\frac{\pilike(\obs \vert \ipar,\W )}{\pi(\obs,\W)}\right) \pilike(\obs \vert \ipar,\W )\,d \obs \, d\mupr(\ipar).
\end{equation}


\subsection{Finite-dimensional approximation 
} \label{sec:Discretization}
To facilitate the presentation of our computational methods, we make a finite-dimensional approximation of the parameter field 
by using a finite element discretization. 
Let 
$\bParSpace \subset \ParSpace$ denote a subspace of $\ParSpace$ spanned by  $n$ piecewise continuous Lagrange polynomial basis functions $\{ \psi_j\}^{n}_{j=1}$ over a mesh with elements of size $h$. Then the discrete parameter $\ipar_h \in \bParSpace$ is given by 
\begin{equation}
  \ipar_h =  \sum_{j=1}^{n} \ipar_j \psi_j .
\end{equation}
The Bayesian inverse problem is then stated for the finite-dimensional coefficient vector $\bipar = (\ipar_1,\dots, \ipar_n)^T$ of $\ipar_h$, with $n$ possibly very large. 
The prior distribution of the discrete parameter $\bipar$ is Gaussian  $\mathcal{N} (\bmupr,\bCpr)$ with $\bmupr$ representing the coefficient vector of the discretized prior mean of $\mpr$, and $\bCpr$ representing the covariance matrix corresponding to $\Cpr = \mathcal{A}^{-2}$, given by 
\begin{equation}
  \bCpr =  \mathbf{A}^{-1}\mathbf{M}\mathbf{A}^{-1},
\end{equation}
where $\mathbf{A}$ is the finite element matrix of the Laplacian-like operator $\mathcal{A}$, and $\mathbf{M}$ is the mass matrix. Moreover, let $\bF_d: \R^n \to \R^{d}$ denote the discretized parameter-to-observable map corresponding to $\mathcal{F}_d$, we have $\bF = \W \bF_d$ as in (\ref{eq:WF}). Then 
the likelihood function corresponding to (\ref{eq:likeliW}) for the discrete parameter $\bipar$ is given by
\begin{equation}
\label{eq:blikelihood}
\pilike(\obs \vert \bipar, \W) \propto \exp\left( -  \frac{1}{2} \|\obs - \W\bF_d(\bipar)\|^2_{\Gnoise^{-1}} \right).
\end{equation}


\section{Computational methods} \label{sec:method}


\subsection{Double-loop Monte Carlo estimator} \label{sec:dlmc}
To solve the OED problem \eqref{eq:OED}, we need to evaluate the EIG repeatedly for each given design $\W$. The double integrals in the EIG expression can be computed 
by a double-loop Monte Carlo (DLMC) estimator $\Psi^{dl}$ defined as
\begin{equation}\label{eq:Psi-dl}
    \Psi^{dl}(\W) := \frac{1}{n_{\text{out}}} \sum^{n_{\text{out}}}_{i=1}\log\left(\frac{\pilike(\obs_i\vert\bipar_i, \W)}{\hat{\pi}(\obs_i, \W)}\right),
\end{equation}
where $\bipar_i$, $i = 1, \dots, n_{\text{out}}$, are i.i.d.\ samples from prior $\mathcal{N}(\bipar_{\text{pr}}, \bCpr)$ in the outer loop and $\obs_i = \mathbf{F}(\bipar_i) + \obsn_i$ are the realizations of the data with i.i.d.\ noise $\obsn_i \sim \mathcal{N}(\bs{0},\Gnoise)$.
$\hat{\pi}(\obs_i, \W)$ is a Monte Carlo estimator of the normalization constant $\pi(\obs_i,  \W)$ with $n_{\text{in}}$ samples in the inner loop, given by 
\begin{equation}\label{eq:hat-pi}
    \hat{\pi}(\obs_i,  \W) := \frac{1}{n_{\text{in}}}\sum^{n_{\text{in}}}_{j=1}\pilike(\obs_i\vert{\bipar}_{i,j},  \W),
\end{equation}
where ${\bipar}_{i,j}$, $j = 1, \dots, n_{\text{in}}$, are i.i.d.\ samples from the prior $\mathcal{N}(\bipar_{\text{pr}}, \bCpr)$.

For complex posterior distributions, e.g., high-dimensional, locally supported, multi-modal, non-Gaussian, etc., evaluation of the normalization constant is often intractable, i.e., a prohibitively large number of samples $n_{\text{in}}$ is needed.
As one particular example, when the posterior of $\bipar$ for data $\obs_i$ generated at sample $\bipar_i$ concentrates in a very small region far away from the mean of the prior, the likelihood $\pilike(\obs_i \vert \bipar_{i,j}, \W)$ is extremely small for most samples $\bipar_{i,j}$, 
which which leads to a requirement of a large number of samples to evaluate $\hat{\pi}(\obs_i, \W)$ with relatively small estimation error. This is usually prohibitive, since one evaluation of the parameter-to-observable map, and thus one solution of the large-scale PDE model, is required for each of $n_{\text{out}} \times n_{\text{in}}$ samples. This $n_{\text{out}} \times n_{\text{in}}$ PDE solves are required for each design matrix $\W$ at each optimization iteration.



\subsection{Derivative-informed projected neural networks 
} \label{sec:DIPNet}

Recent research has motivated the deployment of neural networks as surrogates for parametric PDE mappings \cite{BhattacharyaHosseiniKovachki2020,FrescaManzoni2022,KovachkiLiLiuEtAl2021,LiKovachkiAzizzadenesheliEtAl2020b,LuJinKarniadakis2019,OLearyRoseberryChenVillaEtAl2022,NelsenStuart21,NguyenBui2021model,OLearyRoseberryVillaChenEtAl2022}. These surrogates can be used to accelerate the computation of the EIG within OED problems.
Specifically, to reduce the prohibitive computational cost, we build a surrogate for the parameter-to-observable map $\mathbf{F}_d: \R^n \mapsto \R^d$ at all candidate sensor locations by a derivative-informed projected neural network (DIPNet) \cite{OLearyRoseberry2020,OLearyRoseberryDuChaudhuriEtAl2021,OLearyRoseberryVillaChenEtAl2022}. Often, PDE-constrained high-dimensional parametric maps, such as the parameter-to-observable map $\mathbf{F}_d$, admit low-dimensional structure due to the correlation of the high-dimensional parameters, the regularizing property of the underlying PDE solution, and/or redundant information in the data from all candidate sensors.
When this is the case, the DIPNet can exploit this low-dimensional
structure and parametrize a parsimonious map between the most informed
subspaces of the input parameter and the output observables. The
dimensions of the input and output subspaces are referred to as the
``information dimension'' of the map, which is often significantly
smaller than the parameter and data dimensions. \tom{The architectural
  strategy that we employ exploits compressibility of the map, by
  first reducing the input and output dimensionality via projection to
  informed reduced bases of the inputs and outputs. A neural network
  is then used to construct a low-dimensional nonlinear mapping
  between the two reduced bases. Error bounds for the effects of basis
  truncation, and parametrization by neural network are studied in
  \cite{BhattacharyaHosseiniKovachki2020,OLearyRoseberryDuChaudhuriEtAl2021,OLearyRoseberryVillaChenEtAl2022}.}

For the input parameter dimension reduction, we use a vector
generalization of an active subspace (AS)
\cite{ZahmConstantinePrieurEtAl2020}, which is spanned by the
generalized eigenvectors (input reduced basis) corresponding to the
$r_M$ largest eigenvalues of the eigenvalue problem 
\begin{equation} \label{as_evp}
	\left[\int_{\mathcal{M}_n}\nabla_{\bipar}\bF_d(\bipar) ^T\nabla_{\bipar}\bF_d(\bipar ) d\mu_{\text{pr}}(\bipar)\right]\mathbf{v}_i = \lambda_i^{AS} \bCpr^{-1} \mathbf{v}_i,
\end{equation}
where the eigenvectors $\mathbf{v}_i$ are ordered by the decreasing generalized eigenvalues $\lambda_i^\text{AS}$, $i = 1, \dots, r_M$. 
For the output data dimension reduction, we use a proper orthogonal decomposition (POD)\cite{ManzoniNegriQuarteroni2016,QuarteroniManzoniNegri2015}, which uses the eigenvectors (output reduced basis) corresponding to the first $r_F$ eigenvalues of the expected observable outer product matrix,
\begin{equation} \label{pod_evp}
	\left[\int_{\mathcal{M}_n}\bF_d(\bipar)\bF_d(\bipar)^T d\mu_{\text{pr}}(\bipar)\right] \mathbf{\phi}_i = \lambda_i^{POD}  \mathbf{\phi}_i,
\end{equation}
where the eigenvectors $\mathbf{\phi}_i$ are ordered by the decreasing eigenvalues $\lambda_i^\text{POD}$, $i = 1, \dots, r_F$. When the eigenvalues of AS and POD both decay quickly, the mapping $\bipar \mapsto \bF_d(\bipar)$ can be well approximated when $\bipar$ and $\bF_d$ are projected to the corresponding subspaces with small $r_M$ and $r_F$; \tom{in this case approximation error bounds for reduced basis representation of the mapping are given by the trailing eigenvalues of the systems \eqref{as_evp},\eqref{pod_evp}}. \tom{This allows one to detect appropriate ``breadth'' for the neural network via the direct computation of the associated eigenvalue problems, removing the need for ad-hoc neural network hyperparameter search for appropriate breadth}. The neural network surrogate $\tilde{\bF}_d$ of the map $\mathbf{F}_d$ then has the form
\begin{equation}
	\tilde{\bF}_d(\bipar,[\mathbf{w},\mathbf{b}]) = \mathbf{\Phi}_{r_F}\mathbf{f}_r(\mathbf{V}_{r_M}^T\bipar,\mathbf{w}) + \mathbf{b},
\end{equation}
where $\mathbf{\Phi}_{r_F} \in \R^{d\times r_F}$ represents the POD reduced basis for the output, $\mathbf{V}_{r_M} \in \R^{n\times r_M}$ represents the AS reduced basis for the input, $\mathbf{f}_r\in\R^{r_F}$ is the neural network mapping between the two bases parametrized by weights $\mathbf{w}$ and bias $\mathbf{b}$. \tom{Since the reduced basis dimensions $r_F,r_M$ are chosen based on spectral decay of the AS and POD operators, we can choose them to be the same; for convenience we denote the reduced basis dimension instead by $r$.} The remaining difficulty is how to properly parametrize and train the neural network mapping. \tom{While the use of the reduced basis representation for the approximating map allows one to detect appropriate breadth for the neural network by avoiding complex neural network hyperparameter searches, and associated nonconvex neural network trainings, how to choose appropriate depth for the network is still an open question. While neural network approximation theory suggests deeper networks have richer representative capacities, in practice, for many architectures, adding depth eventually diminishes performance in what is known as the ``peaking phenomenon''\cite{Hughes1968}. In general finding appropriate depth for e.g., fully-connected feedforward neural networks requires re-training from scratch different networks with differing depths. In order to avoid this issue we employ an adaptively constructed residual network (ResNet) neural network parametrization of the mapping between the two reduced bases. This adaptive construction procedure is motivated by recent approximation theory that conceives of ResNets as discretizations of sequentially minimizing control flows \cite{LiLinShen22}, where such maps are proven to be universal approximators of $L^p$ functions on compact sets. A schematic for our neural network architecture can be seen in Fig.\ \ref{fig:dipresnet}.}

This strategy adaptively constructs and trains a sequence of low-rank ResNet layers, where for convenience we take $ r= r_M = r_F$ or otherwise employ a restriction or prolongation layer to enforce dimensional compatibility. The ResNet hidden neurons at layer $i+1$ have the form
\begin{equation}\label{eq:resnet}
   z_{i+1} = z_i + w_{i,2}\sigma(w_{i,1}z_i + b_i),
\end{equation} 
with $z_{i+1},z_i,b_i \in \mathbb{R}^{r}$, $w_{i,2}, w_{i,1}^T \in \mathbb{R}^{r\times k}$, where the parameter $k<r$ is referred to as the layer rank\tom{, and it is chosen to be smaller than $r$ in order to impose a compressed representation of the ResNet latent space update \eqref{eq:resnet}. This choice is guided by the ``well function'' property in \cite{LiLinShen22}}. The ResNet weights $\mathbf{w} = [(w_{i,2},w_{i,1},b_i)]_{i=0}^{\text{depth}}$ consist of all of the coefficient arrays in each layer. \tom{Given appropriate reduced bases with dimension $r$, the ResNet mapping between the reduced bases is trained adaptively, one layer at a time, until over-fitting is detected in training validation metrics. When this is the case, a final global end-to-end training is employed using a stochastic Newton optimizer \cite{OLearyRoseberryAlgerGhattas2020}.} This architectural strategy is able to achieve high generalizability for few and (input-output) dimension-independent data, \tom{for more information on this strategy, please see} \cite{OLearyRoseberryDuChaudhuriEtAl2021}.

\begin{figure}[!htb]
\begin{center}
    \includegraphics[width = 0.8\textwidth]{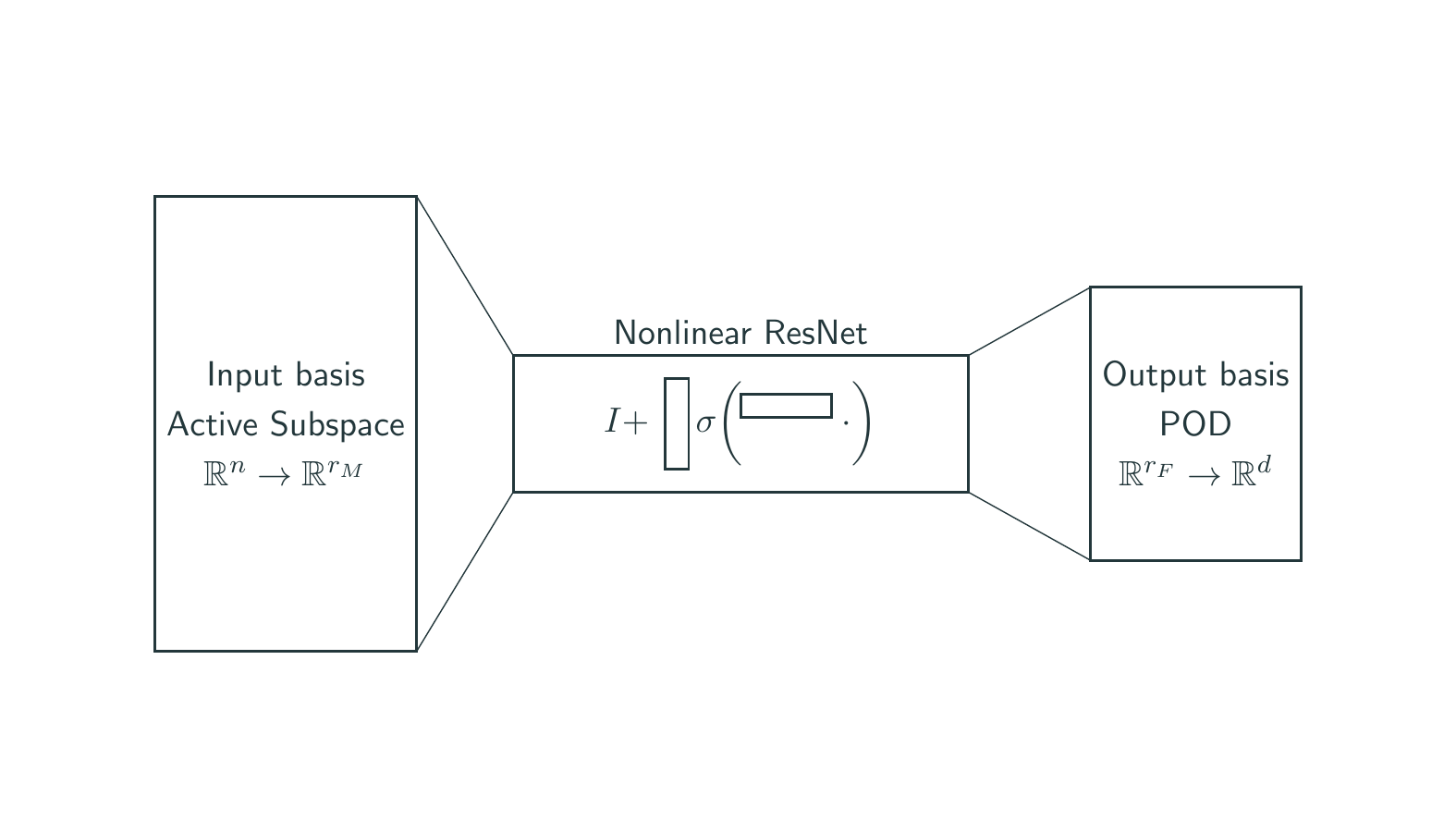}    
    \caption{A schematic representation of the derivative-informed projected neural network using ResNet as the nonlinear mapping between the reduced subspace using active subspaces for input parameters and POD for output observables.}\label{fig:dipresnet}
\end{center}
\end{figure}

By removing the dependence of the input-to-output map on their high-dimensional and uninformed subspaces (complement to the low-dimensional and informed subspaces), we can construct a neural network of small input and output size that requires few training data.
Since these architectures are able to achieve high generalization accuracy with limited training data for parametric PDE maps, they are especially well suited to scalable EIG approximation, since they can be efficiently queried many times at no cost in PDE solves, and require few high fidelity PDE solutions for their construction.


\subsection{DLMC with DIPNet surrogate} \label{sec:nndlmc}

We propose to train a DIPNet surrogate $\tilde{\bF}_d$ for the parameter-to-observable map ${\bF}_d$, so that $\hat{\pi}(\obs_i, \W)$ can be approximated as 
\begin{equation}\label{eq:tilde-pi}
    \tilde{\pi}(\obs_i, \W) = \frac{1}{n_{\text{in}}}\sum^{n_{\text{in}}}_{j=1}
    \exp\left(-\frac{1}{2}\|\obs_i-\W\tilde{\bF}_d({\bipar}_{i,j})\|^2_{\Gnoise^{-1}}\right),
\end{equation}
where we omitted a constant $\frac{1}{(2\pi)^{n_{\text{out}}/2}\det (\Gnoise)^{1/2}}$ since it appears in both the numerator and denominator of the argument of the log in the expression for the EIG. To this end, we can formulate the approximate EIG with the DIPNet surrogate as 
\begin{align}\label{eq:Psi-nn}
    \Psi^{nn} &:= \frac{1}{n_{\text{out}}} \sum^{n_{\text{out}}}_{i=1}\log\left(\frac{\pilike(\obs_i\vert\bipar_i, \W)}{\tilde{\pi}(\obs_i, \W)}\right)\\
    & = - \frac{1}{n_{\text{out}}} \sum^{n_{\text{out}}}_{i=1} \frac{1}{2}\|\W \obsn_{d,i}\|^2_{\Gnoise^{-1}} \\
    & \quad - \frac{1}{n_{\text{out}}} \sum^{n_{\text{out}}}_{i=1} \log \left(\frac{1}{n_{\text{in}}}\sum^{n_{\text{in}}}_{j=1}\exp\left(-\frac{1}{2}\|\obs_i-\W\tilde{\bF}_d({\bipar}_{i,j})\|^2_{\Gnoise^{-1}}\right) \right),
    \label{eq:nneig}
\end{align}
where $\obsn_{d,i}$ are i.i.d.\ observation noise. Thanks to the DIPNet surrogate, the EIG can be evaluated at negligible cost (relating to PDE solver cost) for each given $\W$, and does not require any PDE solves. 


\subsection{Error analysis} \label{sec:error}

\begin{theorem}\label{thm:error}
We assume that the parameter-to-observable map $\bF_d$ and its surrogate $\tilde{\bF}_d$ are bounded as 
\begin{equation}\label{eq:bound-E}
    \mathbb{E}_{\bipar \sim \mu_{\text{pr}}} [\| \bF_d(\bipar)\|^2] < \infty \quad \text{and} \quad \mathbb{E}_{\bipar \sim \mu_{\text{pr}}} [\| \tilde{\bF}_d(\bipar)\|^2] < \infty.
\end{equation}
Moreover, we assume that
the generalization error of the DIPNet surrogate can be bounded by $\varepsilon$, i.e., 
\begin{equation}\label{eq:bound-error}
    \mathbb{E}_{\bipar \sim \mu_{\text{pr}}} [\| \bF_d(\bipar) - \tilde{\bF}_d(\bipar)\|^2] \leq \varepsilon^2.
\end{equation}
Then the error in the approximation of the normalization constant by the DIPNet surrogate can be bounded by 
\begin{equation}\label{eq:pi-error}
    \vert \hat{\pi} (\obs_i, \W) - \tilde{\pi}(\obs_i, \W)\vert \leq C_i \varepsilon,
\end{equation}
for sufficiently large $n_{\text{in}}$ and some constants $0 < C_i < \infty$, $i = 1, \dots, n_{\text{out}}$. Moreover, 
the approximation error for the EIG can be bounded by 
\begin{equation}\label{eq:Psi-error}
    \vert\Psi^{dl}(\W) - {\Psi}^{nn}(\W)\vert \leq C \varepsilon
\end{equation}
for some constant $0 < C < \infty$.
\end{theorem}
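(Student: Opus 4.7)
The plan is to establish the normalization-constant bound \eqref{eq:pi-error} first and then lift it to the EIG bound \eqref{eq:Psi-error} via a single application of the mean value theorem for the logarithm. Throughout, I would treat the inner-loop samples $\bipar_{i,j}$ as fixed for each outer-loop realization $(\bipar_i,\obs_i)$ and introduce the shorthand
\[
g_{i,j} := \exp\!\bigl(-\tfrac{1}{2}\|\obs_i-\W\bF_d(\bipar_{i,j})\|^2_{\Gnoise^{-1}}\bigr), \qquad
\tilde g_{i,j} := \exp\!\bigl(-\tfrac{1}{2}\|\obs_i-\W\tilde\bF_d(\bipar_{i,j})\|^2_{\Gnoise^{-1}}\bigr),
\]
so that $\hat\pi(\obs_i,\W) = \tfrac{1}{n_{\text{in}}}\sum_j g_{i,j}$ and $\tilde\pi(\obs_i,\W) = \tfrac{1}{n_{\text{in}}}\sum_j \tilde g_{i,j}$, and the exact population versions satisfy $\hat\pi \to \E_{\bipar\sim\mupr}[g]$ and $\tilde\pi \to \E_{\bipar\sim\mupr}[\tilde g]$ by the law of large numbers.

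For \eqref{eq:pi-error}, I would first bound $|g_{i,j}-\tilde g_{i,j}|$ pointwise. The elementary inequality $|e^{-a}-e^{-b}|\leq |a-b|$ for $a,b\geq 0$, combined with the polarization identity $\|u\|^2-\|v\|^2=\langle u-v,u+v\rangle$ in the $\Gnoise^{-1}$-inner product, gives
\[
|g_{i,j}-\tilde g_{i,j}| \leq \tfrac{1}{2}\,\|\W(\bF_d-\tilde\bF_d)(\bipar_{i,j})\|_{\Gnoise^{-1}}\,\|2\obs_i-\W(\bF_d+\tilde\bF_d)(\bipar_{i,j})\|_{\Gnoise^{-1}}.
\]
Averaging over $j$, then passing to $\mupr$-expectation for $n_{\text{in}}$ sufficiently large, and applying Cauchy--Schwarz yields
\[
|\hat\pi(\obs_i,\W)-\tilde\pi(\obs_i,\W)| \leq \tfrac{1}{2}\bigl(\E_{\bipar}\|\W(\bF_d-\tilde\bF_d)\|^2_{\Gnoise^{-1}}\bigr)^{1/2}\bigl(\E_{\bipar}\|2\obs_i-\W(\bF_d+\tilde\bF_d)\|^2_{\Gnoise^{-1}}\bigr)^{1/2} + o(1).
\]
The first factor is at most $\|\Gnoise^{-1/2}\W\|\,\varepsilon$ by hypothesis \eqref{eq:bound-error}; the second is finite by the triangle inequality and the moment bounds \eqref{eq:bound-E}. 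Absorbing these constants and the $o(1)$ tail (which LLN drives below $\varepsilon$ for $n_{\text{in}}$ large) into a single $C_i$ depending on $\obs_i$, $\W$, and $\Gnoise$ gives \eqref{eq:pi-error}.

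For \eqref{eq:Psi-error}, the two estimators share the same outer samples $(\bipar_i,\obs_i)$, so the likelihood numerator cancels and
\[
\Psi^{dl}(\W)-\Psi^{nn}(\W) = \frac{1}{n_{\text{out}}}\sum_{i=1}^{n_{\text{out}}}\bigl(\log\tilde\pi(\obs_i,\W)-\log\hat\pi(\obs_i,\W)\bigr).
\]
The mean value theorem applied to $\log$ produces
\[
|\log\tilde\pi(\obs_i,\W)-\log\hat\pi(\obs_i,\W)| \leq \frac{|\hat\pi(\obs_i,\W)-\tilde\pi(\obs_i,\W)|}{\xi_i},
\]
with $\xi_i$ between $\hat\pi(\obs_i,\W)$ and $\tilde\pi(\obs_i,\W)$. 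Since both are averages of strictly positive exponentials that, by LLN, concentrate around the corresponding exact normalization constants $\pi(\obs_i,\W)$ and its surrogate counterpart, for $n_{\text{in}}$ sufficiently large we can replace $1/\xi_i$ by a finite constant. Combining with \eqref{eq:pi-error} and averaging over the outer loop yields $|\Psi^{dl}-\Psi^{nn}|\leq C\varepsilon$.

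The principal obstacle I anticipate is the uniform-in-$i$ lower bound on $\xi_i$: while positivity of $\hat\pi$ and $\tilde\pi$ is automatic for any fixed sample, a single constant $C$ independent of the outer-loop realizations requires either a uniform integrability argument on $\Phi$ or an interpretation of the inequality as holding almost surely with a constant that depends on $\obs_i$ through finite moments. The phrase ``for sufficiently large $n_{\text{in}}$'' in the statement suggests the latter reading: LLN drives $\hat\pi$ and $\tilde\pi$ close to the deterministic quantities $\pi(\obs_i,\W)$ and $\E_{\bipar}[\tilde g]$, which are strictly positive pointwise, and the resulting reciprocals can be absorbed into the $C_i$'s, with $C := \max_i C_i / \xi_i^{\min}$. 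The rest of the argument reduces to the polarization and Cauchy--Schwarz steps carried out above.
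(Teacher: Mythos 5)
Your proposal is correct and follows essentially the same route as the paper's proof: the Lipschitz bound $|e^{-a}-e^{-b}|\le|a-b|$ combined with the difference-of-squares factorization and Cauchy--Schwarz to obtain \eqref{eq:pi-error}, and then a Lipschitz (equivalently, mean-value) bound on $\log$ using a lower bound $c_i>0$ on $\hat\pi(\obs_i,\W)$ and $\tilde\pi(\obs_i,\W)$ for large $n_{\text{in}}$ to obtain \eqref{eq:Psi-error}. Your explicit discussion of the law-of-large-numbers step and of the lower bound on $\xi_i$ makes precise exactly the points the paper handles with the phrase ``for sufficiently large $n_{\text{in}}$,'' so there is no substantive difference.
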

\begin{proof}
    For notational simplicity, we omit the dependence of $\hat{\pi}$  and $\tilde{\pi}$ on $\W$ and write $\bF = \W \bF_d$ and $\tilde{\bF} = \W \tilde{\bF}_d$. Note that the bounds \eqref{eq:bound-E} and \eqref{eq:bound-error} also hold for $\bF$ and $\tilde{\bF}$ since $\bF$ and $\tilde{\bF}$ are selection of some entries of $\bF_d$ and $\tilde{\bF}_d$. 
    By definition of $\hat{\pi}$ in \eqref{eq:hat-pi} and $\tilde{\pi}$ in \eqref{eq:tilde-pi}, and the fact that $\vert e^{-x} - e^{-x'}\vert \leq \vert x - x'\vert$ for any $x, x' > 0$, we have 
    \begin{align*}
        & \vert \hat{\pi}(\obs_i) - \tilde{\pi}(\obs_i) \vert, \\
        & \leq  \frac{1}{n_{\text{in}}}\sum^{n_{\text{in}}}_{j=1} \frac{1}{2} \bigg\lvert \|\obs_i - \bF(\bipar_{i,j})\|^2_{\Gnoise^{-1}} - \|\obs_i - \tilde{\bF}(\bipar_{i,j})\|^2_{\Gnoise^{-1}} \bigg\rvert, \\ 
        & =  \frac{1}{n_{\text{in}}}\sum^{n_{\text{in}}}_{j=1} \frac{1}{2} \bigg\lvert \left(2\obs_i - \bF(\bipar_{i,j}) - \tilde{\bF}(\bipar_{i,j})\right)^T \Gnoise^{-1} \left(\bF(\bipar_{i,j}) - \tilde{\bF}(\bipar_{i,j})\right)\bigg\rvert, \\
        & \leq \frac{1}{n_{\text{in}}}\sum^{n_{\text{in}}}_{j=1} \| \Gnoise^{-1}\| (2\|\obs_i\|+ \|\bF(\bipar_{i,j})\| + \|\tilde{\bF}(\bipar_{i,j})\|)\|\bF(\bipar_{i,j}) - \tilde{\bF}(\bipar_{i,j})\|,\\
        & \leq C_i \left(\frac{1}{n_{\text{in}}}\sum^{n_{\text{in}}}_{j=1}  \|\bF(\bipar_{i,j}) - \tilde{\bF}(\bipar_{i,j})\|^2 \right)^{1/2} ,
    \end{align*}
    where we used the Cauchy--Schwarz inequality in the last inequality with $C_i$ given by 
    \begin{equation}
        C_i = \| \Gnoise^{-1}\| \left(4 \|\obs_i\|^2 + 
        \left( \frac{1}{n_{\text{in}}}\sum^{n_{\text{in}}}_{j=1}  \|\bF(\bipar_{i,j})\|^2 \right)^{1/2} + 
        \left( \frac{1}{n_{\text{in}}}\sum^{n_{\text{in}}}_{j=1} \|\tilde{\bF}(\bipar_{i,j})\|^2 \right)^{1/2}
        \right).
    \end{equation}
    For sufficiently large $n_{\text{in}}$, we have that $C_i < \infty$ by $\|\obs_i\| < \infty$ and the assumption \eqref{eq:bound-E}. Moreover, the error bound \eqref{eq:pi-error} holds by the assumption \eqref{eq:bound-error}. 

    By the definition of the EIGs \eqref{eq:Psi-dl} and \eqref{eq:Psi-nn}, we have 
    \begin{equation}
        \vert\Psi^{dl}(\W) - {\Psi}^{nn}(\W)\vert \leq \frac{1}{n_{\text{out}}} \sum^{n_{\text{out}}}_{i=1} \lvert \log(\hat{\pi}(\obs_i, \W)) - \log(\tilde{\pi}(\obs_i, \W))\rvert.
    \end{equation}
    For sufficiently large $n_{\text{in}}$, we have that the normalization constants $\hat{\pi}(\obs_i, \W)$ and $\tilde{\pi}(\obs_i, \W)$ are bounded away from zero, i.e., 
    \begin{equation}
        \hat{\pi}(\obs_i, \W), \tilde{\pi}(\obs_i, \W) \geq c_i,
    \end{equation}
    for some constant $c_i > 0$. Then we have 
    \begin{equation}
        \vert\Psi^{dl}(\W) - {\Psi}^{nn}(\W)\vert \leq \frac{1}{n_{\text{out}}} \sum^{n_{\text{out}}}_{i=1} \frac{1}{c_i} \vert \hat{\pi} (\obs_i, \W) - \tilde{\pi}(\obs_i, \W)\vert \leq \frac{1}{n_{\text{out}}} \sum^{n_{\text{out}}}_{i=1} \frac{C_i}{c_i} \varepsilon,
    \end{equation}
    where we used the bound \eqref{eq:pi-error}, which implies the bound \eqref{eq:Psi-error} with constant 
    \begin{equation}
        C = \frac{1}{n_{\text{out}}} \sum^{n_{\text{out}}}_{i=1} \frac{C_i}{c_i}.
    \end{equation}
\end{proof}

\subsection{Greedy algorithm} \label{sec:greedy}
With the DIPNet surrogate, the evaluation of the DLMC EIG $\Psi^{nn}$ defined in (\ref{eq:nneig}) does not involve any PDE solves. Thus to solve the optimization problem
\begin{equation}
\label{eq:nnop}
   \W^* = \argmax_{\W \in \mathcal{W}} \Psi^{nn}(\W),
\end{equation}
we can directly use a greedy algorithm that requires evaluations of
the EIG, not its derivative w.r.t. $\W$. Let $S_d$ denote the set of
all $d$ candidate sensors; we wish to choose $r$ sensors from $S_d$
that maximize the approximate EIG (approximated with the DIPNet
surrogate). At the first step with $t = 1$, we select the sensor $v^*
\in S_d$ corresponding to the maximum approximate EIG and set $S^* =
\{v^*\}$. Then at step $t = 2, \dots, r$, with $t-1$ sensors selected
in $S^*$, we choose the $t$-th sensor $v^* \in S_d \setminus S^*$
corresponding to the maximum approximate EIG evaluated with $t$
sensors $S^* \cup \{v^*\}$; see Algorithm \ref{alg:greedy} for the
greedy optimization process, which can achieve (quasi-optimal)
experimental designs with an approximation guarantee under suitable
assumptions on the incremental information gain of an additional
sensor; see \cite{JagalurMohanMarzouk21} and references therein. Note
that at each step the approximate EIG can be evaluated in parallel for
each sensor choice $S^* \cup \{v\}$ with $v \in S_d \setminus S^*$. 

\begin{algorithm}
\caption{Greedy algorithm to solve (\ref{eq:nnop})}\label{alg:greedy}
\begin{algorithmic}[1]
\Require data $\{ \obs_i\}^{N_s}_{i=1}$ generated from the prior samples $ \{ \bs{m}_i\}^{N_s}_{i=1}$, $d$ sensor candidates set $S_d$, sensor budget $r$, optimal sensor set $S^* = \emptyset$
\Ensure optimal sensor set $S^*$
\For{$t = 1,\dots,r$}
	\State $S \Leftarrow S_d \setminus S^*$
	\For{$v \in S$}
		\State $\W_v$ is the design matrix of sensor choice $S^*\cup \{ v\}$
		\State evaluate $\Psi^{nn}(\W_v)$
	\EndFor
	\State $v^* \Leftarrow \argmax_{v \in S} \Psi^{nn}(\W_v)$
	\State $S^* \Leftarrow S^* \cup \{ v^*\}$
\EndFor
\end{algorithmic}
\end{algorithm}

\section{Numerical results} \label{sec:numerical}

In this section, we present numerical results for OED problems involving a Helmholtz acoustic inverse scattering problem and an advection-reaction-diffusion inverse transport problem to illustrate the efficiency and accuracy of our method. We compare the approximated normalization constant and EIG of our method with 1) the DLMC truth computed by a large number of Monte Carlo samples; and 2) the DLMC sampled at the same computational cost (number of PDE solves) as our method using DIPNet training.

Both PDE problems are discretized using the finite element library \texttt{FEniCS} \cite{AlnaesBlechtaHakeEtAl2015}. The construction of training data and reduced bases (active subspace and proper orthogonal decomposition) is implemented in \texttt{hIPPYflow} \cite{hippyflow}, a library for dimension reduced PDE surrogate construction, building on top of PDE adjoints implemented in \texttt{hIPPYlib} \cite{VillaPetraGhattas20}. The DIPNet neural network surrogates are constructed in \texttt{TensorFlow} \cite{AbadiBarhanChenEtAl2016}, and are adaptively trained using a combination of Adam \cite{KingmaBa2014}, and a Newton method, LRSFN, which improves local convergence and generalization \cite{OLearyRoseberryAlgerGhattas2019,OLearyRoseberryAlgerGhattas2020,OLearyRoseberryDuChaudhuriEtAl2021}.


\subsection{Helmholtz problem} \label{sec:Helmholtz}

For the first numerical experiment, we consider 
an inverse wave scattering problem modeled by the Helmholtz equation with uncertain medium in the two-dimensional physical domain $\Omega = (0,3)^2$
\begin{subequations}
\begin{align}
  - \Delta {u}  - e^{2m} k^2 u &= f \quad \text{in } \Omega, \\
   \text{PML boundary condition} &\text{ on } \partial \Omega \setminus \Gamma_\text{top}, \\
   \nabla u \cdot \mathbf{n} &= 0  \text{ on }  \Gamma_\text{top}, \\
  \mathcal{F}_d(m) &= [u(\bs{x}_i,m)] \quad \text{at } \bs{x}_i \in \Omega.
\end{align}
\end{subequations}
where $u$ is the total wave field, $k \approx 4.55$ is the wavenumber, and $e^{2m}$ models the uncertainty of the medium, with the parameter $\ipar$ a log-prefactor of the squared wavenumber. The right hand side $f$ is a point source located at $\mathbf{x} = (0.775,2.5)$.
The perfectly matched layer (PML) boundary condition approximates a semi-infinite domain. The candidate sensor locations $\bs{x}_i$ are linearly spaced in the line segment between the edge points $(0.1,2.9)$ and $(2.9,2.9)$, \kw{with coordinates $\{ (0.1+2i/35, 2.9) \}_{i=0}^{49}$}, as shown in Fig.\ \ref{fig:h0}. The prior distribution for the uncertain parameter $m$ is Gaussian $\mu_\text{pr} = \mathcal{N}(\mpr,\Cpr)$ with zero mean $\mpr = 0$ and covariance $\Cpr = (5.0I - \Delta)^{-2}$. The mesh used for this problem is uniform of size $128 \times 128$. 
We use quadratic elements for the discretization of the wave field $u$ and linear elements for the parameter $m$, leading to a discrete parameter $\bipar$ of dimension $16,641$. \tom{The dimension of the wave field $u$ is $66049$, the wave is more than sufficiently resolved in regards to the Nyquist sampling criteria for wave problems.} 
 A sample of the parameter field $m$ and the PDE solution $u$ is shown in Fig.\ \ref{fig:h0} with all $50$ candidate sensor locations marked in circles.

The network has $10$ low-rank residual layers, each with a layer rank of $10$. \tom{For this numerical example we demonstrate the effects of using different breadths in the neural network representation, in each case the ResNet learns a mapping from the first $r_M$ basis vectors for active subspace to the first $r_M$ basis vectors of POD. In the case that $r_M>50$ we use a linear restriction layer to reduce the ResNet latent representation to the $50$ dimensional output. For the majority of the numerical results, we employ a $r_M = 50$ dimensional network. 
The neural network is trained adaptively using $4915$ training samples, and $1228$ validation samples. Using $512$ independent testing samples, the DIPNet surrogate was $81.56\%$ $\ell^2$ accurate measured as a percentage by
\begin{equation}\label{eq:ell2acc}
  \ell^2 \text{ accuacy} = 100\left(1 - \frac{\|\tilde{\mathbf{F}}_d - \mathbf{F}_d\|_2}{\|\mathbf{F}_d\|_2}\right).
\end{equation}
For more details on this neural network architecture and training, see \cite{OLearyRoseberryDuChaudhuriEtAl2021}. The computational cost of the $50$ dimensional active subspace projector using $128$ samples is equivalent to the cost of $842$ additional training data; since the problem is linear the additional linear adjoint computations are comparable to the costs of the training data generation. As we will see in the next example, when the PDE is nonlinear the additional active subspace computations are marginal. Thus for fair comparison with the same computational cost of PDE solves, we use $4915 + 842 = 5757$ samples for simple MC.}

\begin{figure}[ht]
\begin{center}
\centerline{\includegraphics[width=.45\columnwidth]{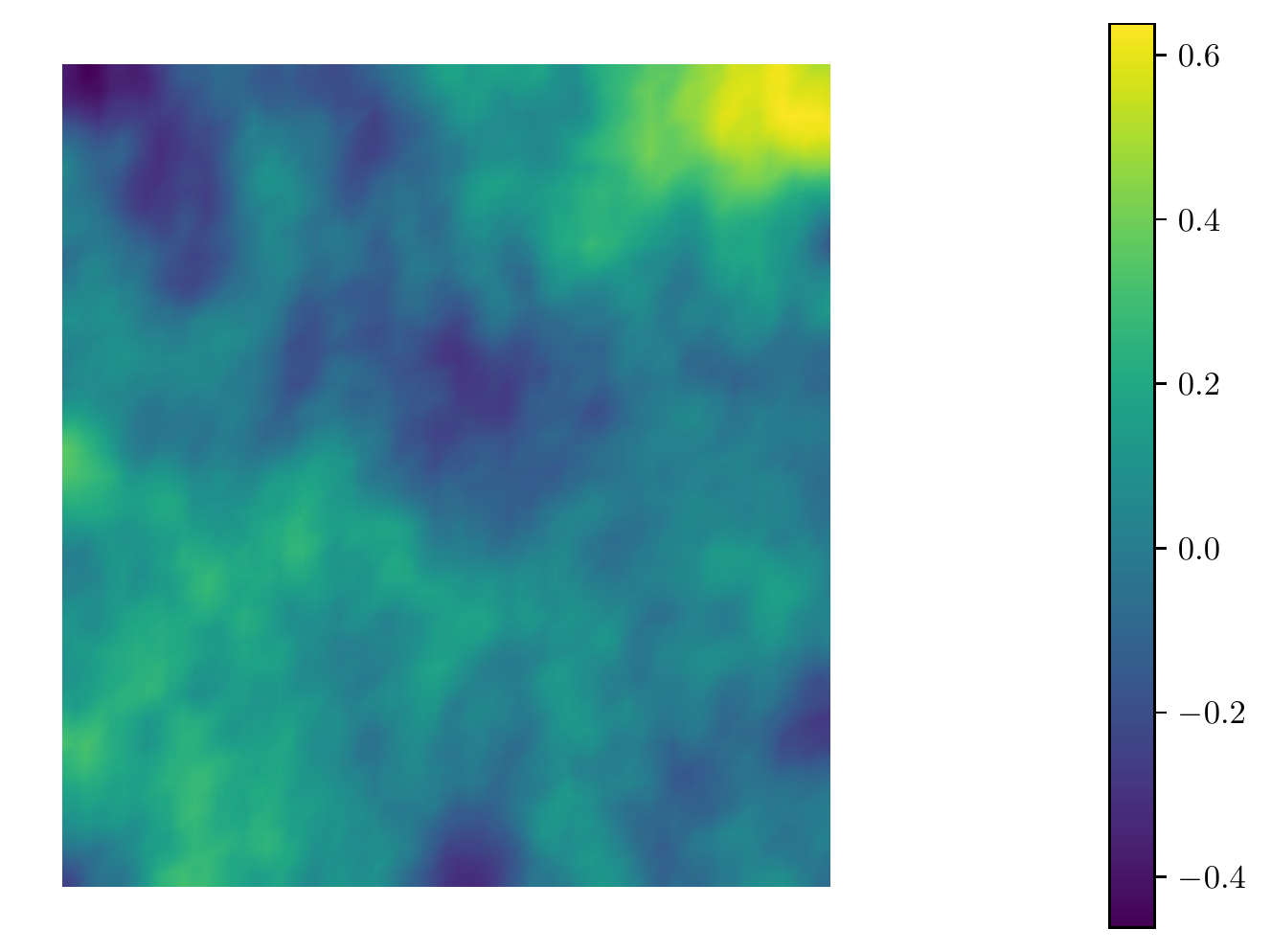}\includegraphics[width=.45\columnwidth]{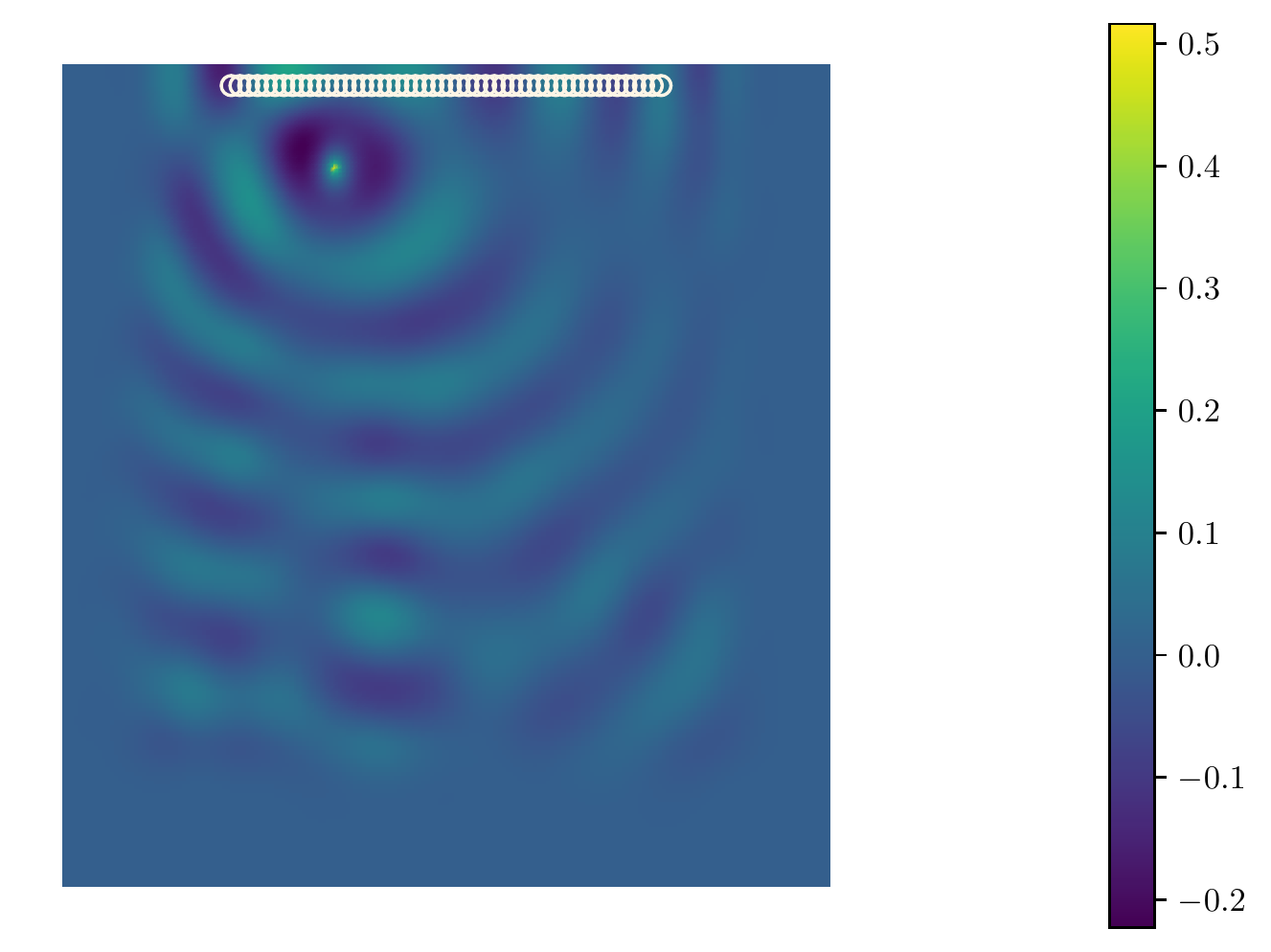}}
\caption{A random sample of the parameter field $\bipar$ (left) and the corresponding solution $\mathbf{u}$ with candidate observation sensor locations marked in circles (right) for the Helmholtz problem.}
\label{fig:h0}
\end{center}
\vskip -0.2in
\end{figure}
To test the efficiency and accuracy of our DIPNet surrogate, we first compute the log normalization constant $\log\pi(\obs)$ with our DIPNet surrogate for given observation data $\obs$ generated by $\obs = \W\bF_d(\bipar) +\obsn$. $\bipar$ is a random sample from the prior. We use in total $60000$ random samples for Monte Carlo (MC) to compute the normalization constant as the ground truth.  Fig.\ \ref{fig:h1} shows the $\log\pi(\obs)$ comparison for three random designs $\W$ that choose $15$ sensors out of $50$ candidates. We can see that DIPNet surrogate converges to a value close to the ground truth MC reference, 
while for the (simple) MC with $5757$ samples, the approximated value indicated by the green star is much less accurate than the DIPNet surrogate using $60000$ samples with similar cost in PDE solves. Note that the DIPNet training and evaluation cost for this small size neural network is negligible compared to PDE solves. 

\begin{figure}[ht]
\vskip 0.2in
\begin{center}
\centerline{\includegraphics[width=.33\columnwidth]{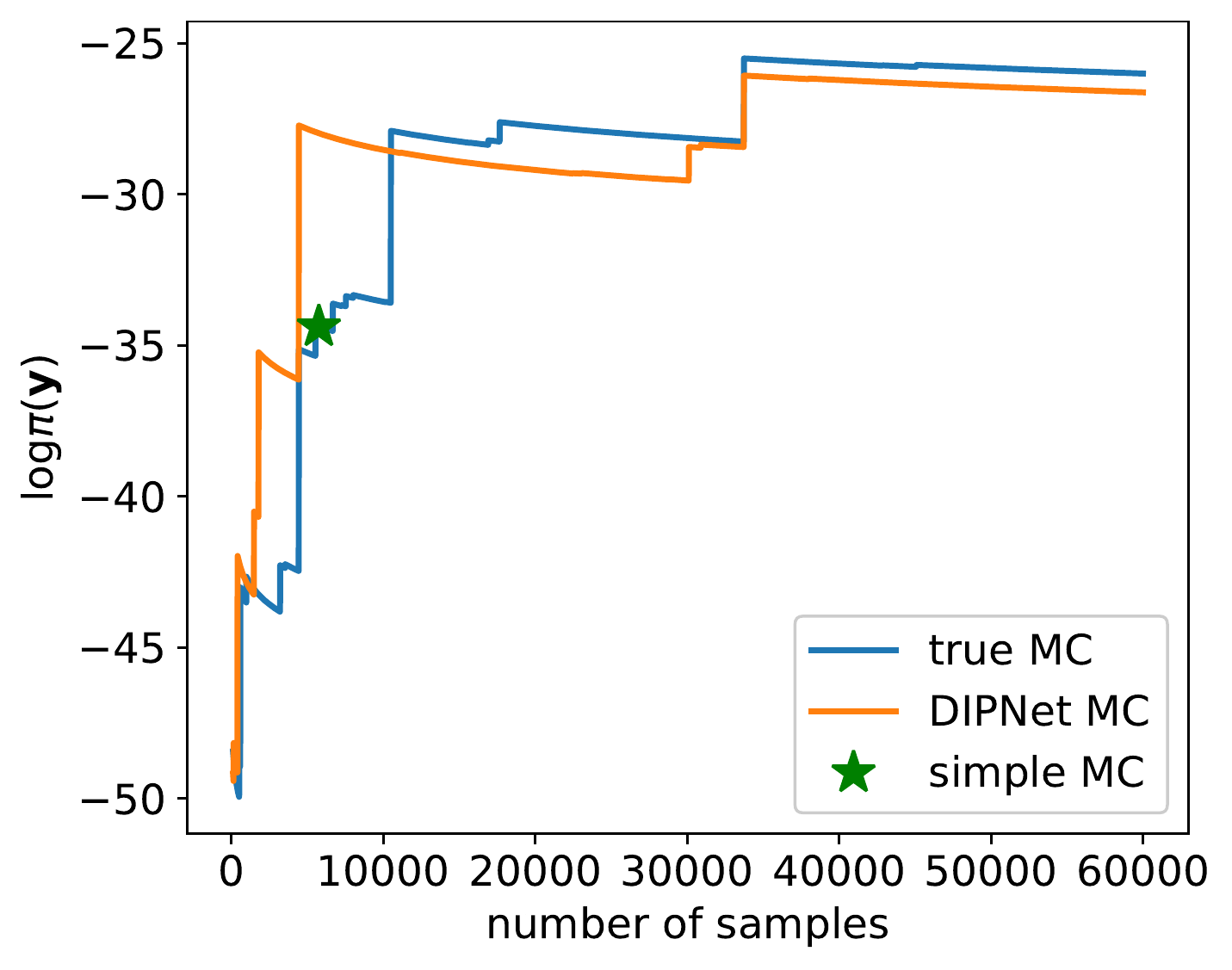}\includegraphics[width=.33\columnwidth]{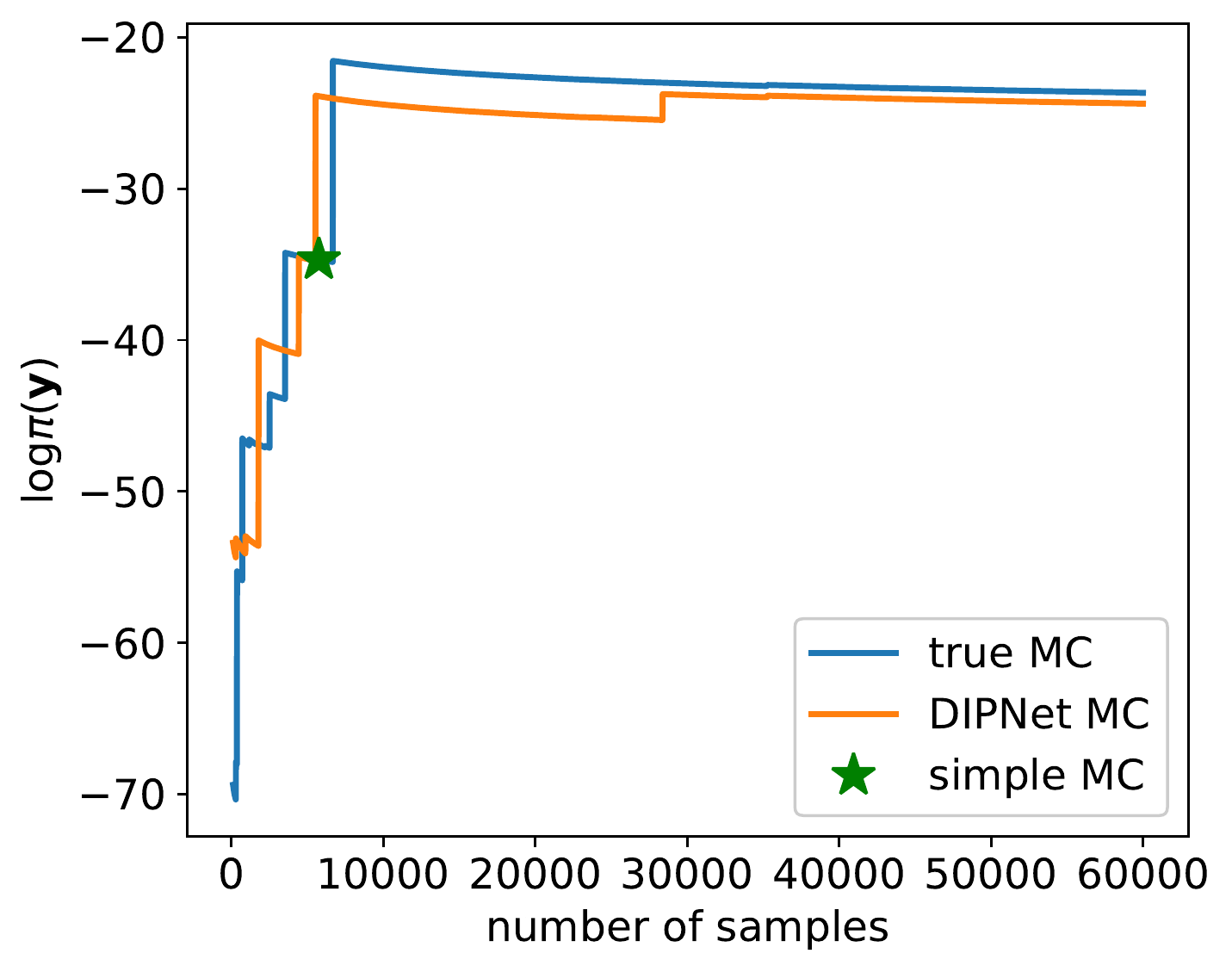}\includegraphics[width=.33\columnwidth]{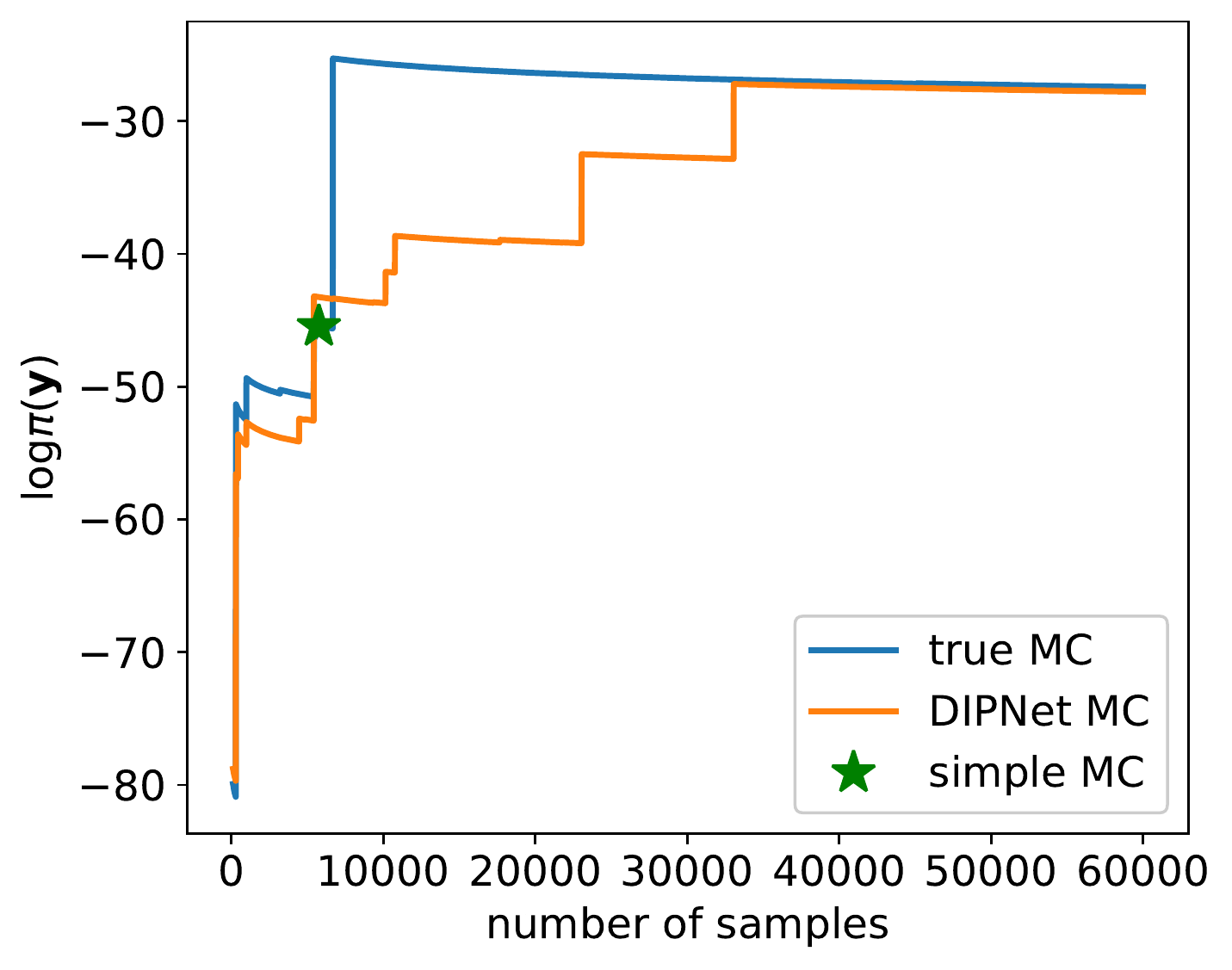}}
\caption{The approximation of the log normalization constant with increasing numbers of samples without (true MC) and with (DIPNet MC) surrogate at $3$ random designs. Green stars indicate $5757$ samples (for simple MC), which is the same computational cost as DIPNet.}
\label{fig:h1}
\end{center}
\vskip -0.2in
\end{figure}

The left and middle figures in Fig.\ \ref{fig:h2} illustrate the sample distributions of the relative approximation errors for the log normalization constant $\log\pi(\obs)$ and the EIG $\Psi$ ($n_{\text{out}} = 200$) with (DIPNet MC with $60000$ samples) and without (simple MC with $5757$ samples) the DIPNet surrogate, compared to the true MC with $60000$ samples. These results show that using DIPNet surrogate we can approximate $\log\pi(\obs)$ and $\Psi$ much more accurately with less bias compared to the simple MC. The sample distributions of EIG at $200$ random designs compared to the design chosen by the greedy optimization using DIPNet surrogate for different number of sensors are shown in the right figure, from which we can see that our method can always chose better designs with larger EIG values than all the random designs.

\begin{figure}[ht]
\begin{center}
\centerline{\includegraphics[width=.337\columnwidth]{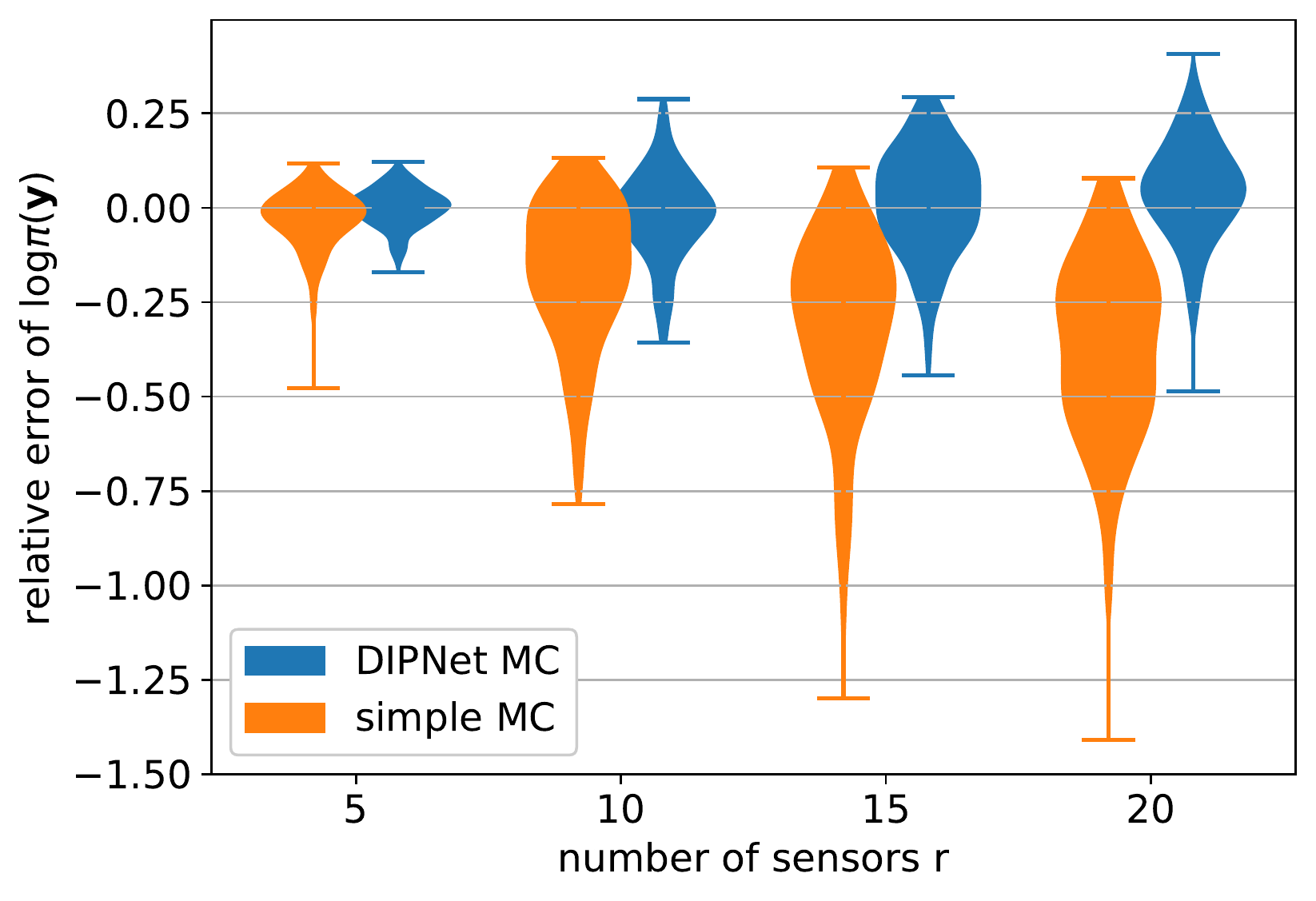}\includegraphics[width=.33\columnwidth]{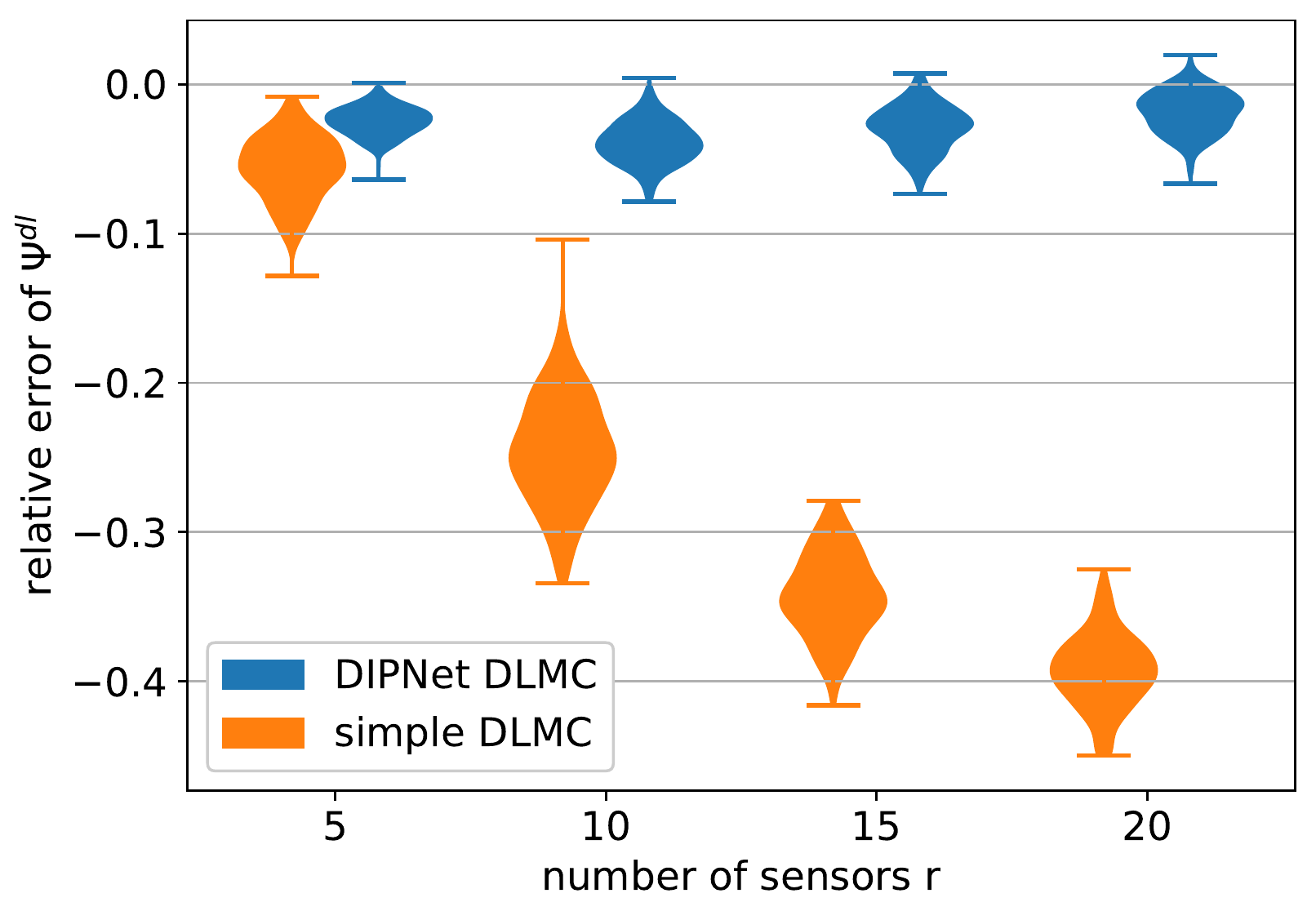}\includegraphics[width=.321\columnwidth]{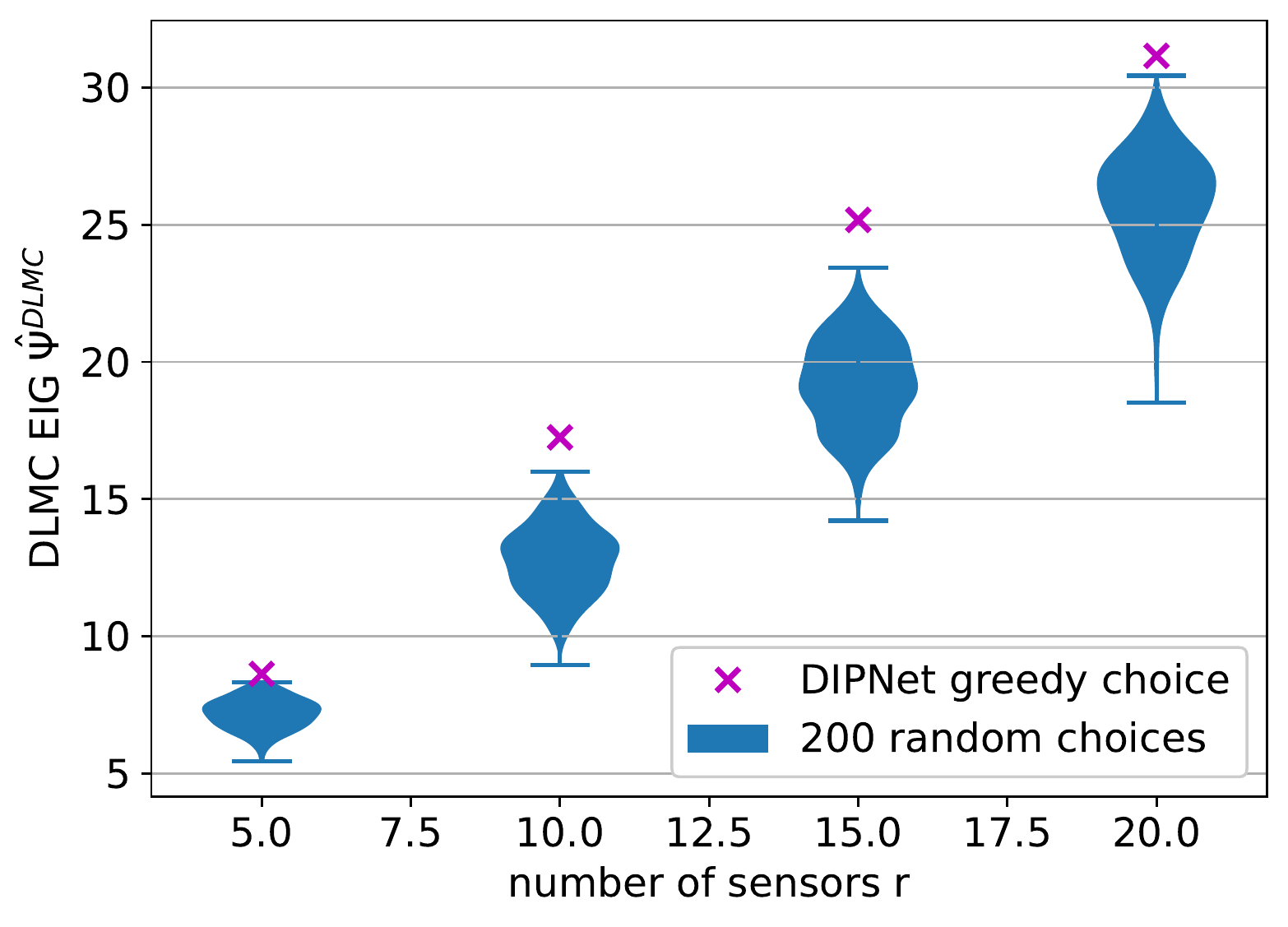}}
\caption{Sample distributions for 200 random designs of the relative approximation errors (compared to true MC) for the log normalization constant $\log\pi(\obs)$ (left) and EIG $\Psi$ (middle) by DIPNet MC and simple MC with different number of sensors $r$.
Right: Blue filled areas represent the sample distributions of the true DLMC EIG $\Psi^{dl}$ for $200$ random designs. Pink crosses is the true DLMC EIG $\Psi^{dl}$ of designs chosen by the greedy optimization using DIPNet surrogates.}
\label{fig:h2}
\end{center}
\vskip -0.2in
\end{figure}

Fig.\ \ref{fig:h3} gives approximate values of the EIG with increasing number of outer loop samples $n_{\text{out}}$ using: the DIPNet surrogate with inner loop $n_{\text{in}} = 60000$, simple DLMC with inner loop $n_{\text{in}} = 5757$, and the truth computed with inner loop $n_{\text{in}} = 60000$. We can see that the approximate values by the DIPNet surrogates are almost the same as the truth, while simple DLMC results are very inaccurate. 

\begin{figure}[ht]
\begin{center}
\centerline{\includegraphics[width=.33\columnwidth]{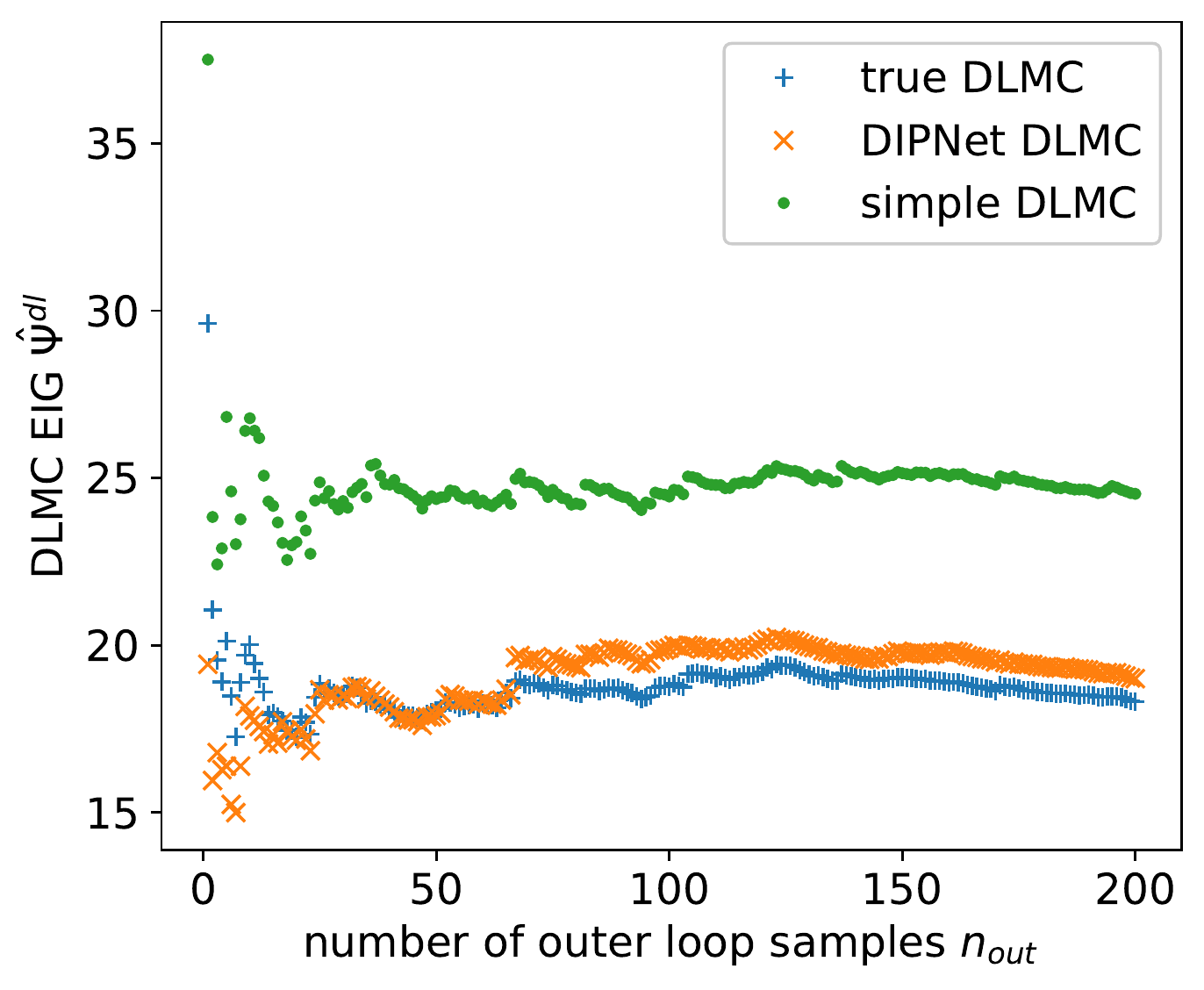}\includegraphics[width=.33\columnwidth]{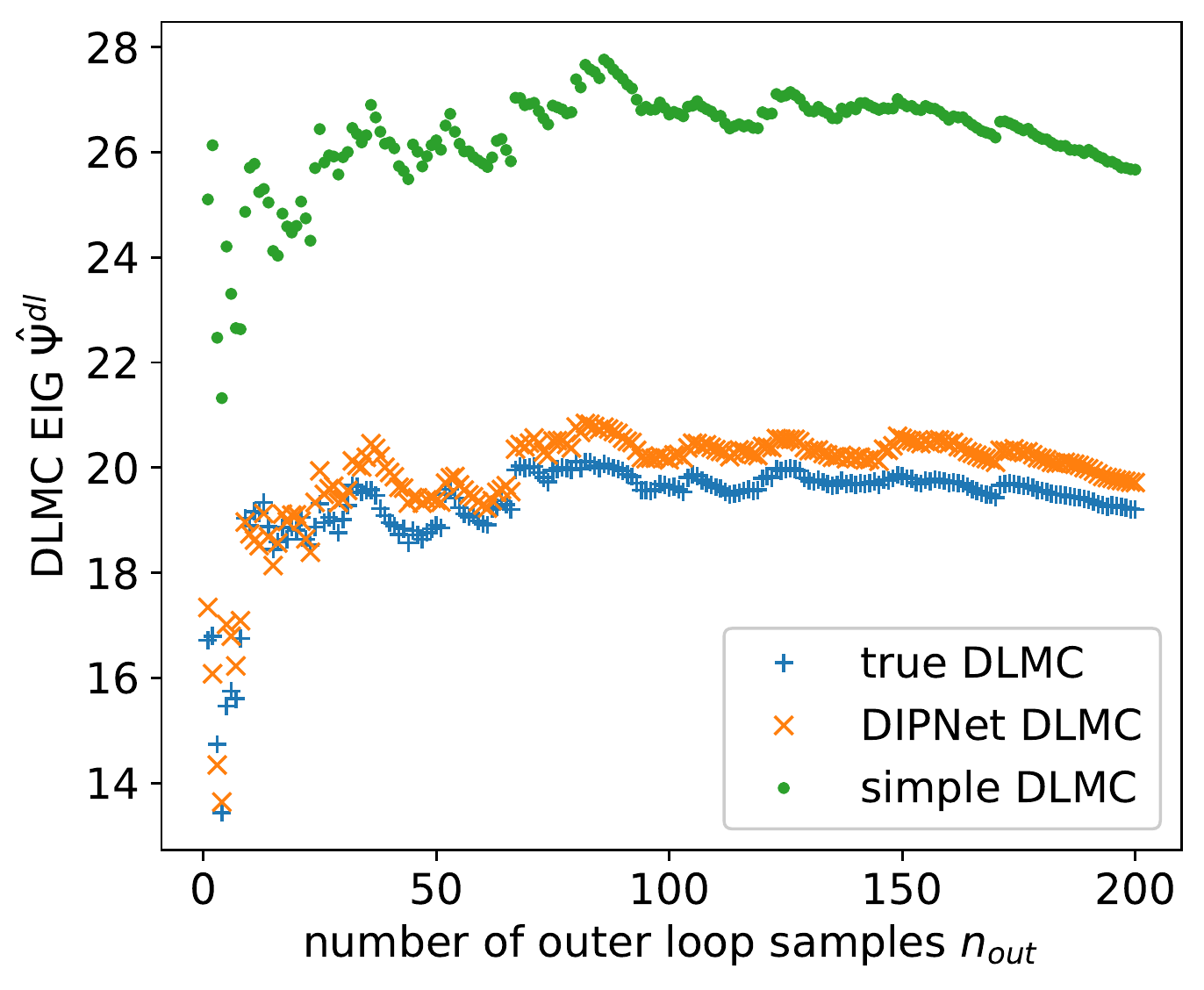}\includegraphics[width=.337\columnwidth]{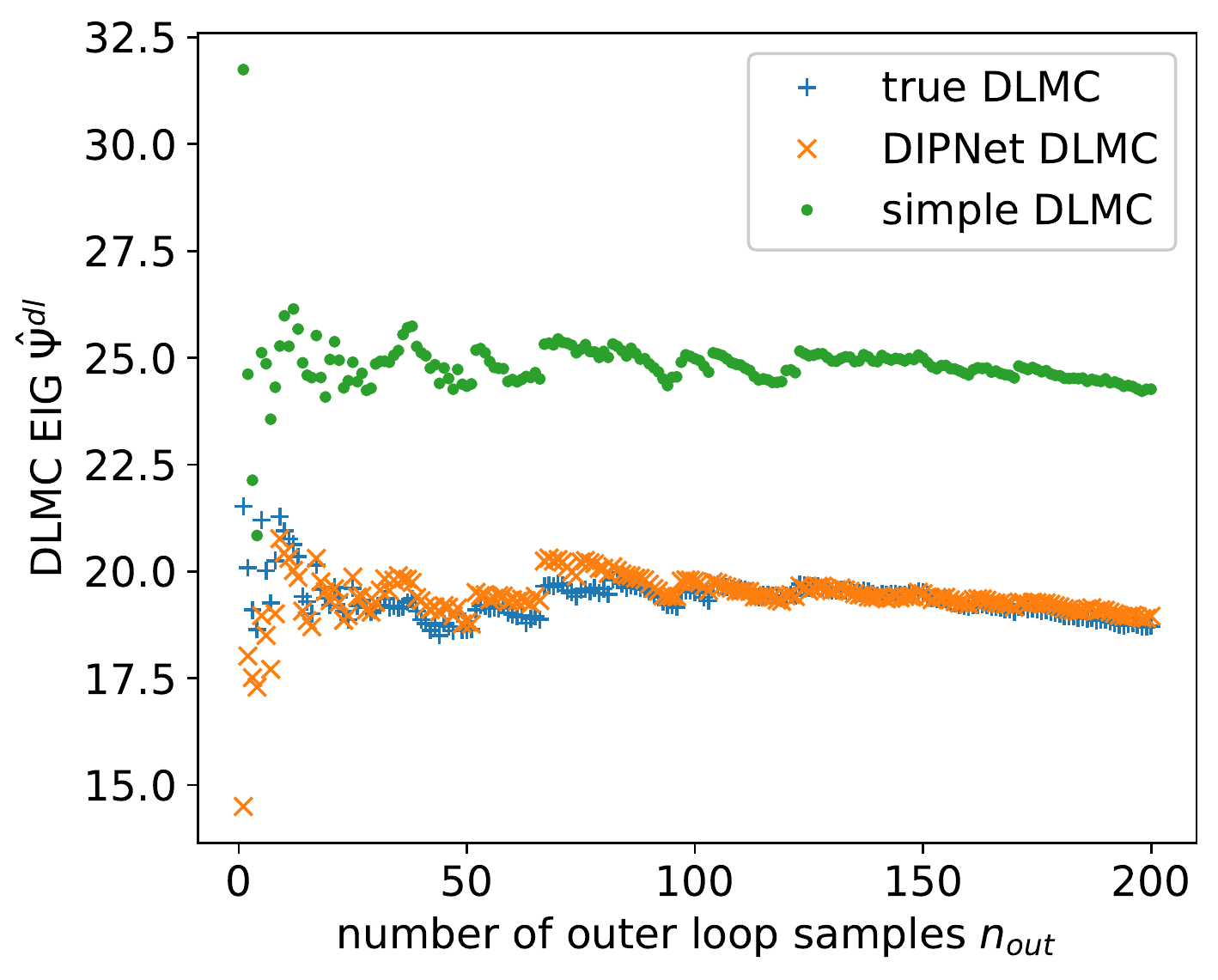}}
\caption{EIG of true DLMC ($n_{\text{in}} = 60000$), DIPNet DLMC ($n_{\text{in}} = 60000$), and simple DLMC ($n_{\text{in}} = 5757$) with increasing number of outer loop samples $n_{\text{out}}$ at $3$ random designs.}
\label{fig:h3}
\end{center}
\vskip -0.2in
\end{figure}

\kw{To show the effectiveness of truncated rank (breadth) for DIPNet surrogate, we evaluate the log normalization constant $\log\pi(\obs)$  and EIG $\Psi$ with breadth $= 10,25,50,100$ and compared with true MC and simple MC in Fig.\ \ref{fig:h4}. We can see that with increasing breadth, the relative error is decreasing, but gets worse when breadth reaches $100$. With  breadth $=100$, the difficulties of neural network training start to dominate and diminish the accuracy. We can also see it in the right part of the figure. The relative error of EIG approximation reduces (close to) linearly with respect to the generalization error of the DIPNet approximation of the observables with network breadth $= 10, 25, 50$, which confirms the error analysis in Theorem \ref{thm:error}. However, when the breadth increases to $100$, the neural network becomes less accurate (without using more training data), leading to less accurate EIG approximation.}

\begin{figure}[ht]
\begin{center}
\centerline{\includegraphics[width=.33\columnwidth]{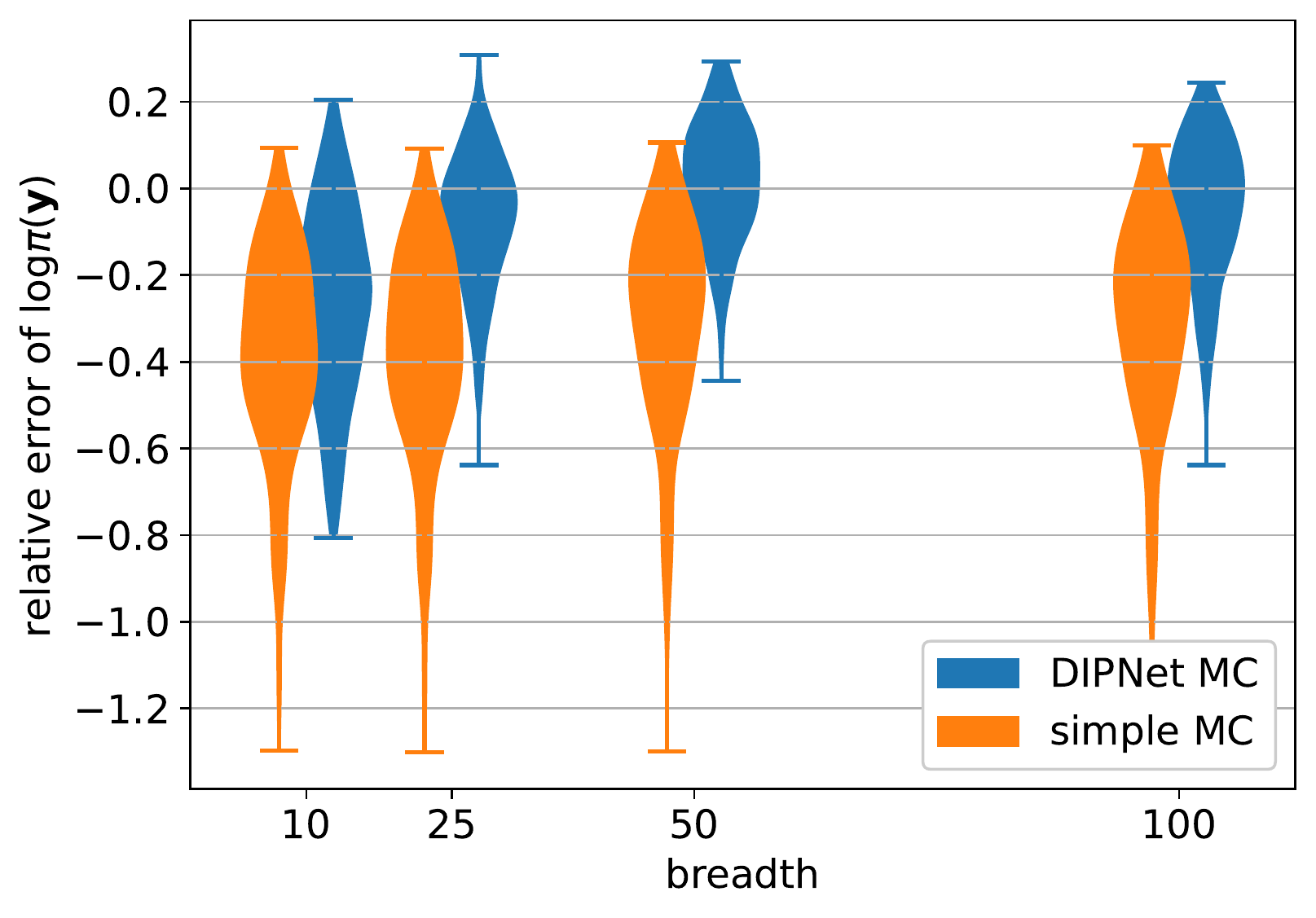}\includegraphics[width=.33\columnwidth]{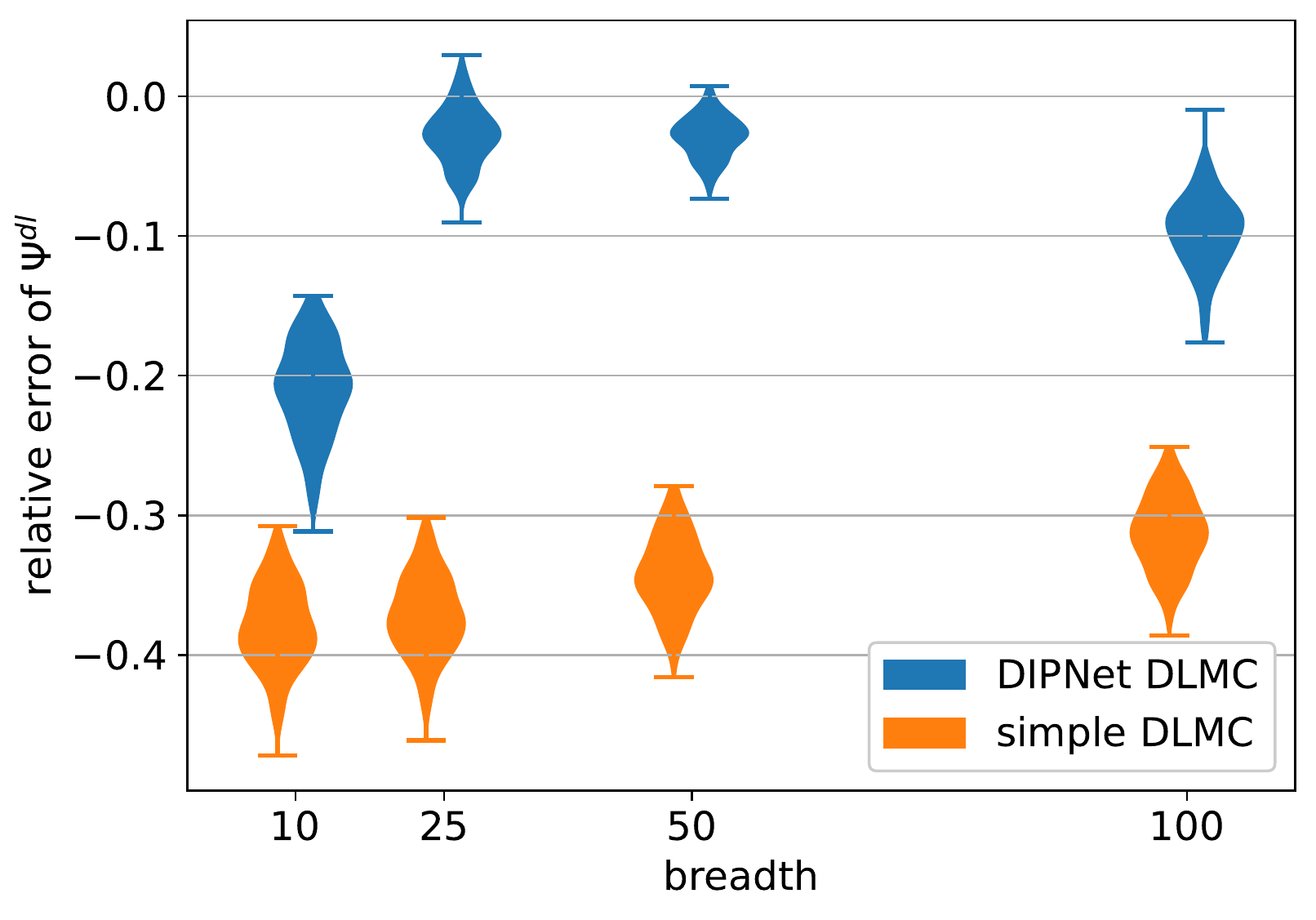}\includegraphics[width=.33\columnwidth]{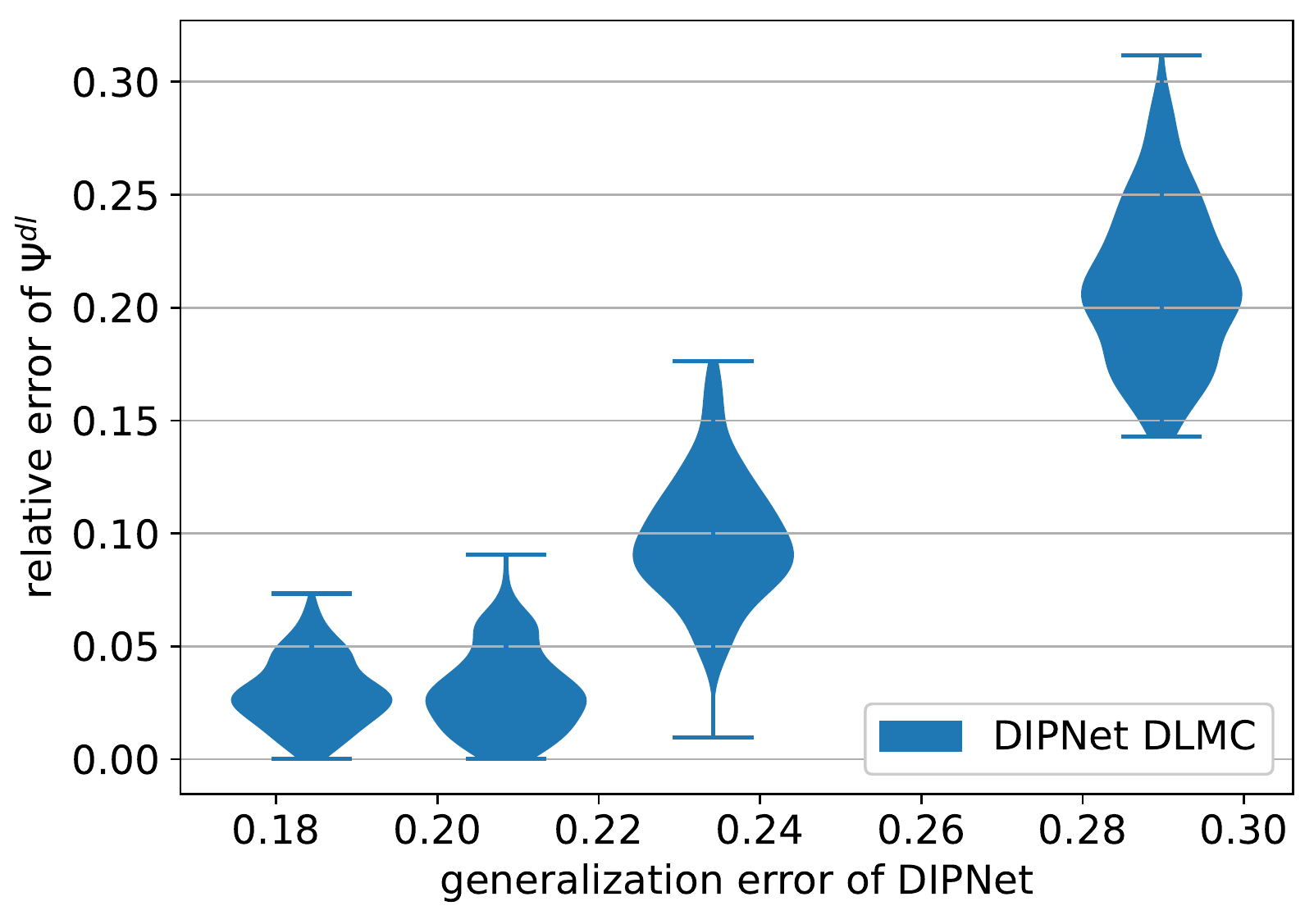}}
\caption{Sample distributions for 200 random designs of the relative approximation errors (compared to true MC) for the log normalization constant $\log\pi(\obs)$ (left) and EIG $\Psi$ (middle) by DIPNet MC and simple MC with increasing breadth. Right: Sample distributions for 200 random designs of the relative approximation errors (compared to true MC) against the corresponding DIPNet generalization error at breadth (from left to right) $ = 50, 100, 25, 10$.
}
\label{fig:h4}
\end{center}
\vskip -0.2in
\end{figure}


\subsection{Advection-diffusion-reaction problem} \label{sec:confusion}

For the second numerical experiment we consider an OED problem for an advection-diffusion-reaction equation with a cubic nonlinear reaction term. The uncertain parameter $m$ appears as a log-coefficient of the cubic nonlinear reaction term. The PDE is defined in a domain $\Omega = (0,1)^2$ as 
\begin{subequations}
\begin{align}
  - \nabla \cdot (k \nabla u) + \mathbf{v} \cdot \nabla u + e^m u^3 &=
  f \quad \text{in } \Omega, \\ 
  u &= 0 \text{ on } \partial \Omega, \\
  \mathcal{F}_d(m) &= [u(\bs{x}_i,m)] \quad \text{at } \bs{x}_i \in \Omega.
\end{align}
\end{subequations}
Here $k=0.01$ is the diffusion coefficient.
The volumetric forcing function f is a smoothed Gaussian bump located at $\bs{x} = (0.7,0.7)$,
\begin{equation}
  f(\mathbf{x}) = \max\bigg( 0.5, e^{-25(x_1 - 0.7)^2  -25 (x_2 - 0.7)^2}\bigg).
\end{equation}
The velocity field $\mathbf{v}$ is a solution of a steady-state Navier Stokes equation with shearing boundary conditions driving the flow (see the Appendix in \cite{OLearyRoseberryVillaChenEtAl2022} for more information on the flow). The candidate sensor locations are located in a linearly spaced mesh-grid of points in $(0.1,0.9)\times(0.1,0.9)$, \kw{with coordinates $\{(0.1i, 0.1j), i,j = 1,2,\dots,9\}$}. The prior distribution for the uncertain parameter $m$ is a mean zero Gaussian with covariance $\Cpr = (I - 0.1\Delta)^{-2}$. The mesh used for this problem is uniform of size $128\times128$. We use linear elements for both $u$ and $m$, leading to a discrete parameter of dimension $16,641$. Fig.\ \ref{fig:c0} gives a prior sample of the parameter field $m$ and the solution $u$ with all $100$ candidate sensor locations in white circles.
\begin{figure}[ht]
\vskip 0.2in
\begin{center}
\centerline{\includegraphics[width=.45\columnwidth]{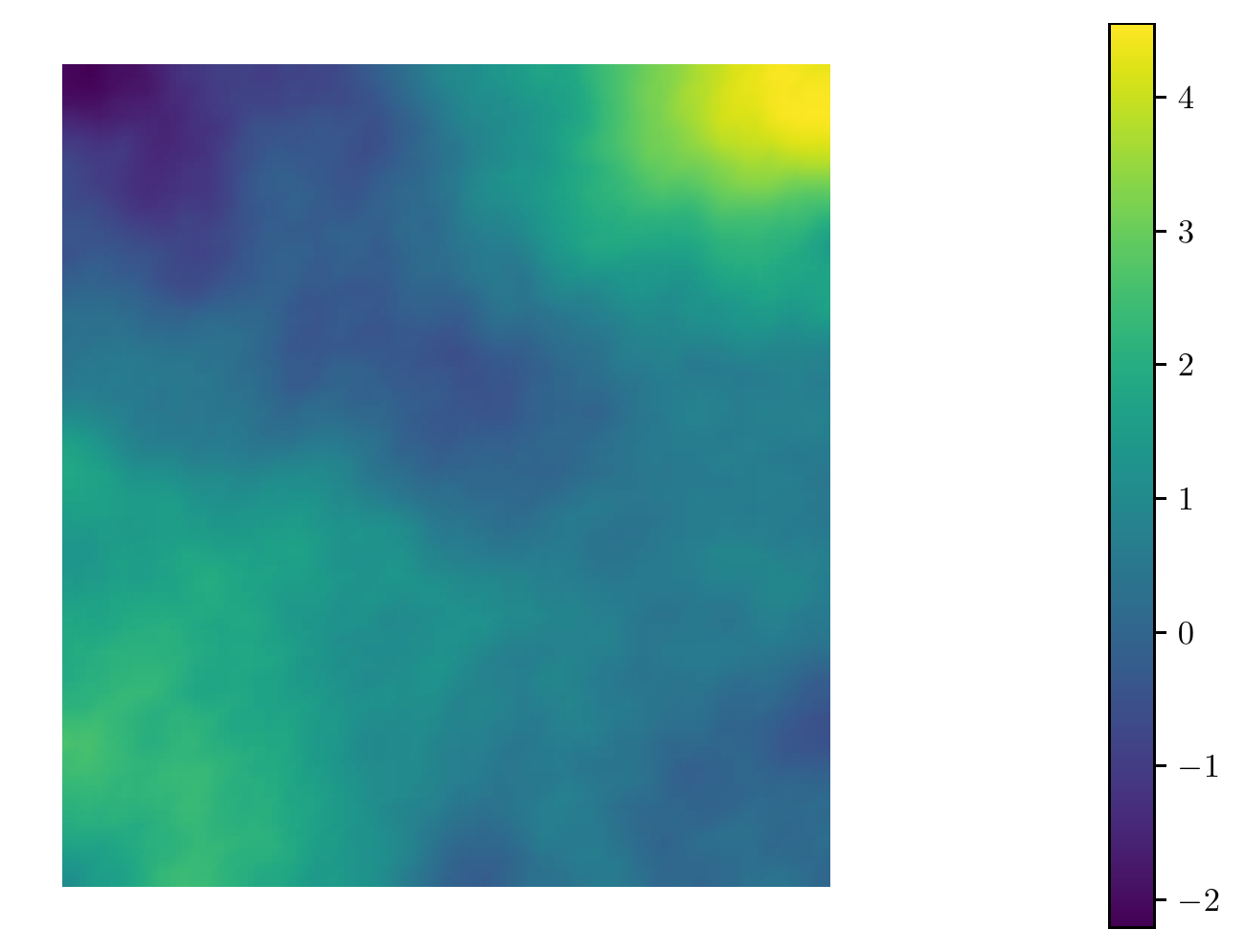}\includegraphics[width=.45\columnwidth]{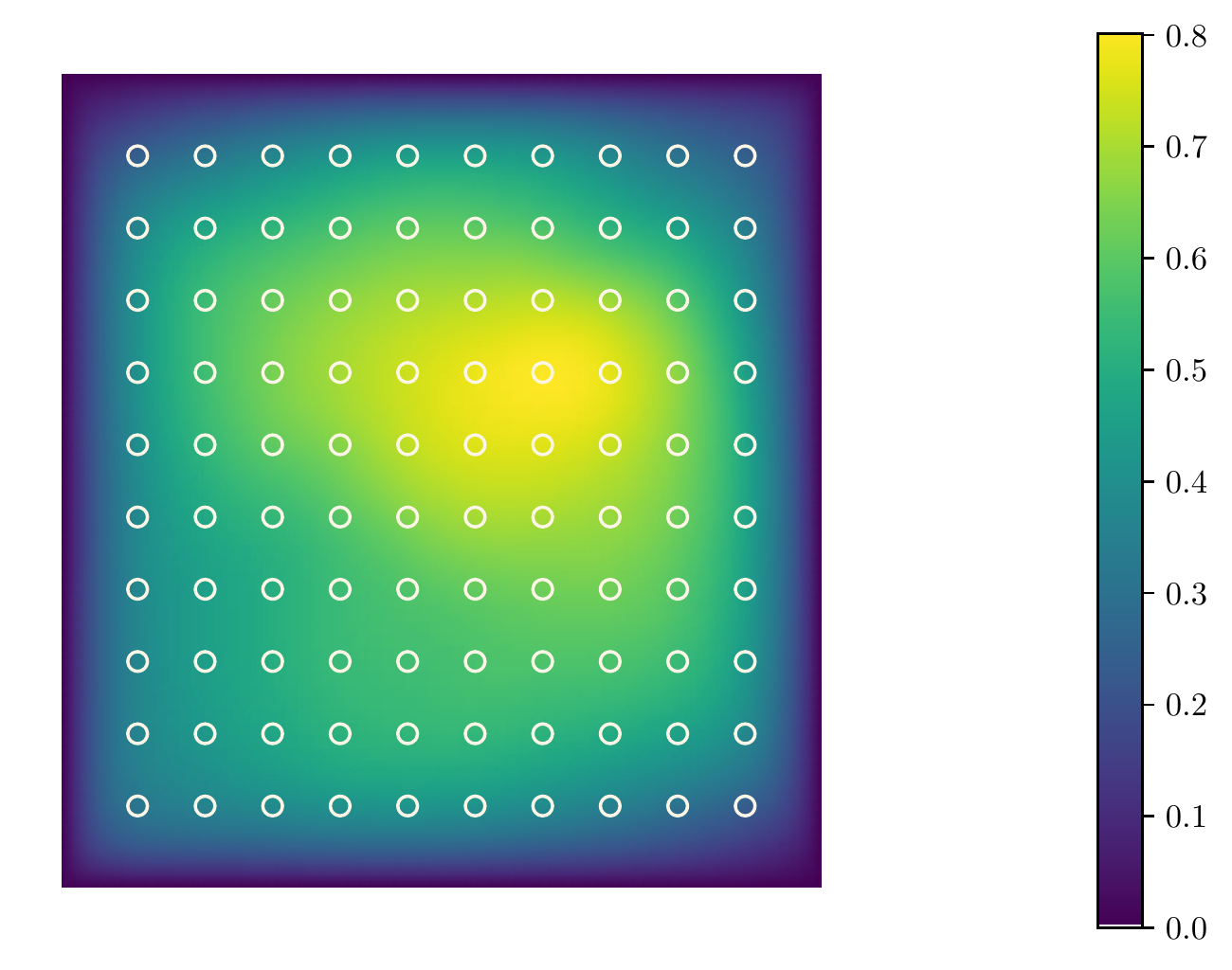}}
\caption{A random sample of the parameter field $\bipar$ (left) and the corresponding solution $\mathbf{u}$ with candidate observation sensor locations marked in circles (right) for the advection-diffusion-reaction problem.}
\label{fig:c0}
\end{center}
\vskip -0.2in
\end{figure}

The neural network surrogate is trained adaptively using $409$ training samples and $102$ validation samples. Using $512$ independent testing samples the DIPNet network was \tom{$97.13\% \ell^2$} accurate (see equation \ref{eq:ell2acc}). The network has $20$ low-rank residual layers, each with a layer rank of $10$, the breadth of the network is $r_M = r_F = 25$. The computational cost of the $25$ dimensional active subspace projector using $256$ samples is equivalent to the cost of $34$ additional training data. As was noted before when the PDE is nonlinear the linear adjoint-based derivative computations become much less of a computational burden. Thus we use \tom{$409+34 = 443$} samples for simple MC for fair comparison.

We first examine the log normalization constant $\log \pi(\obs)$ computed with our $409$ PDE-solve-based DIPNet surrogate compared against the truth computed with $60000$ MC samples and the simple MC computed with $443$ samples. Fig.\ \ref{fig:c1} shows the $\log \pi(\obs)$ comparison for three random designs that select $15$ sensors out of $100$ candidates. We can see that DIPNet MC converges to a value close to the true MC curves, while the simple MC's green star computed with the same number of  PDE solves ($443$) as DIPNet MC, has much worse accuracy. 

\begin{figure}[ht]
\begin{center}
\centerline{\includegraphics[width=.33\columnwidth]{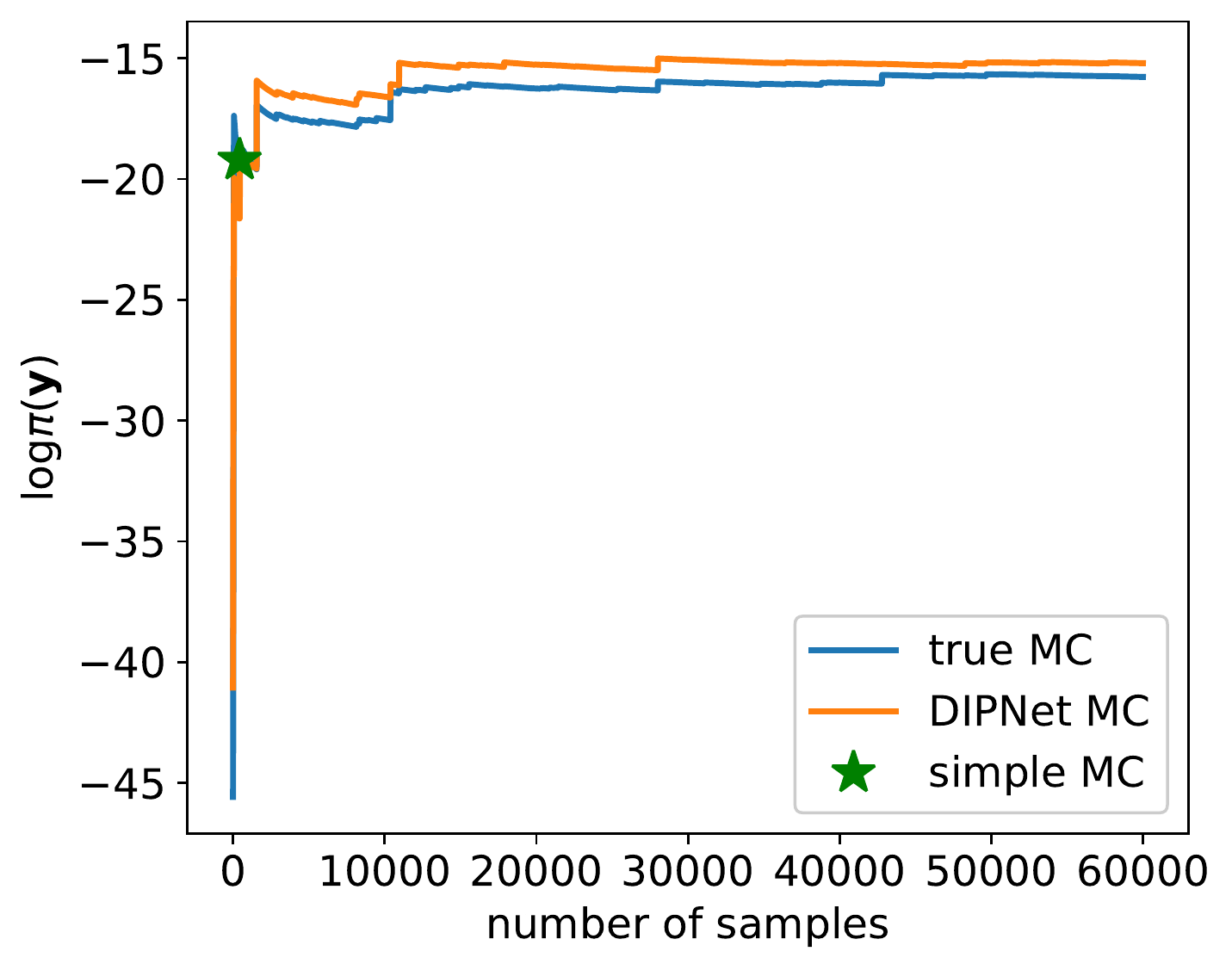}\includegraphics[width=.33\columnwidth]{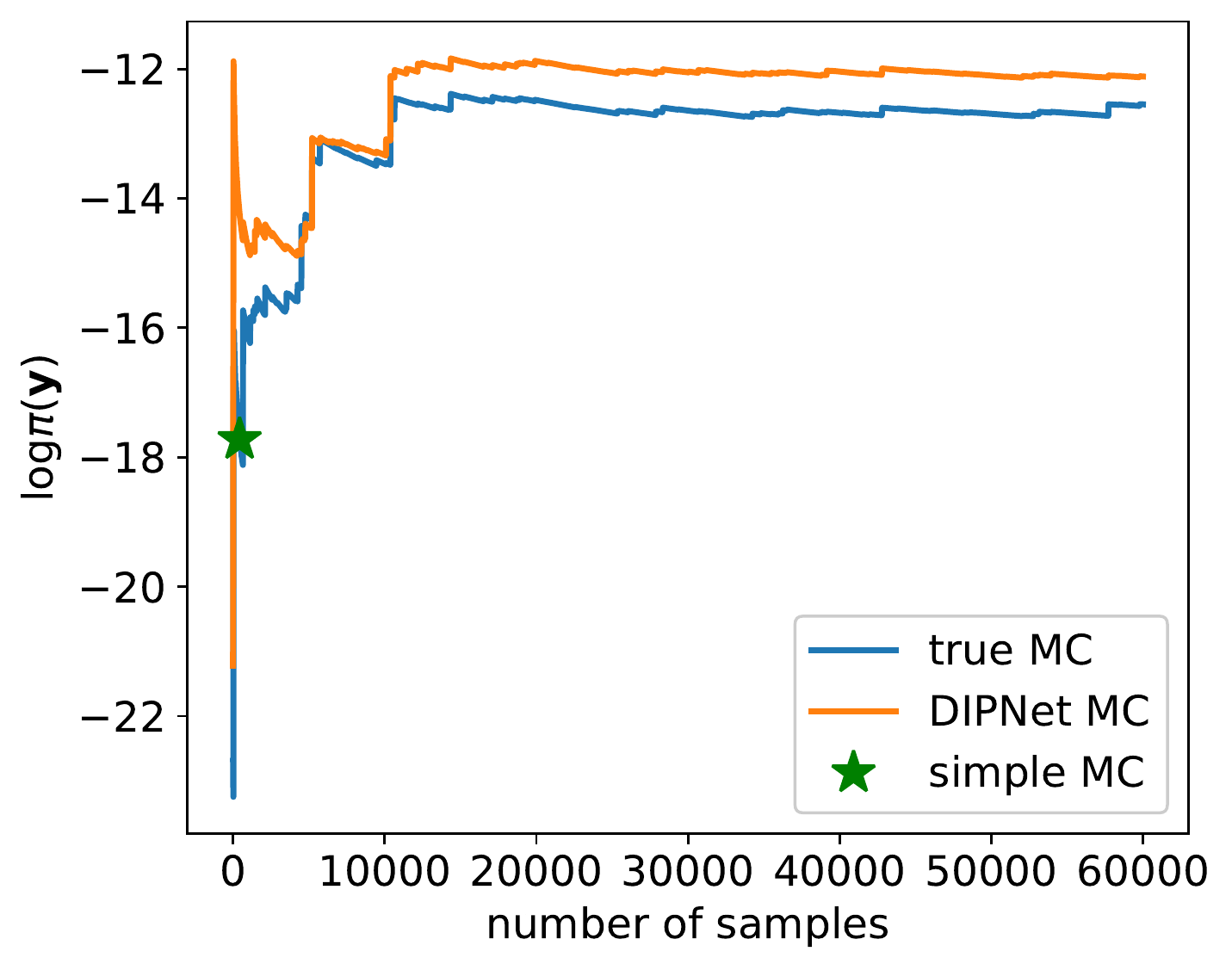}\includegraphics[width=.33\columnwidth]{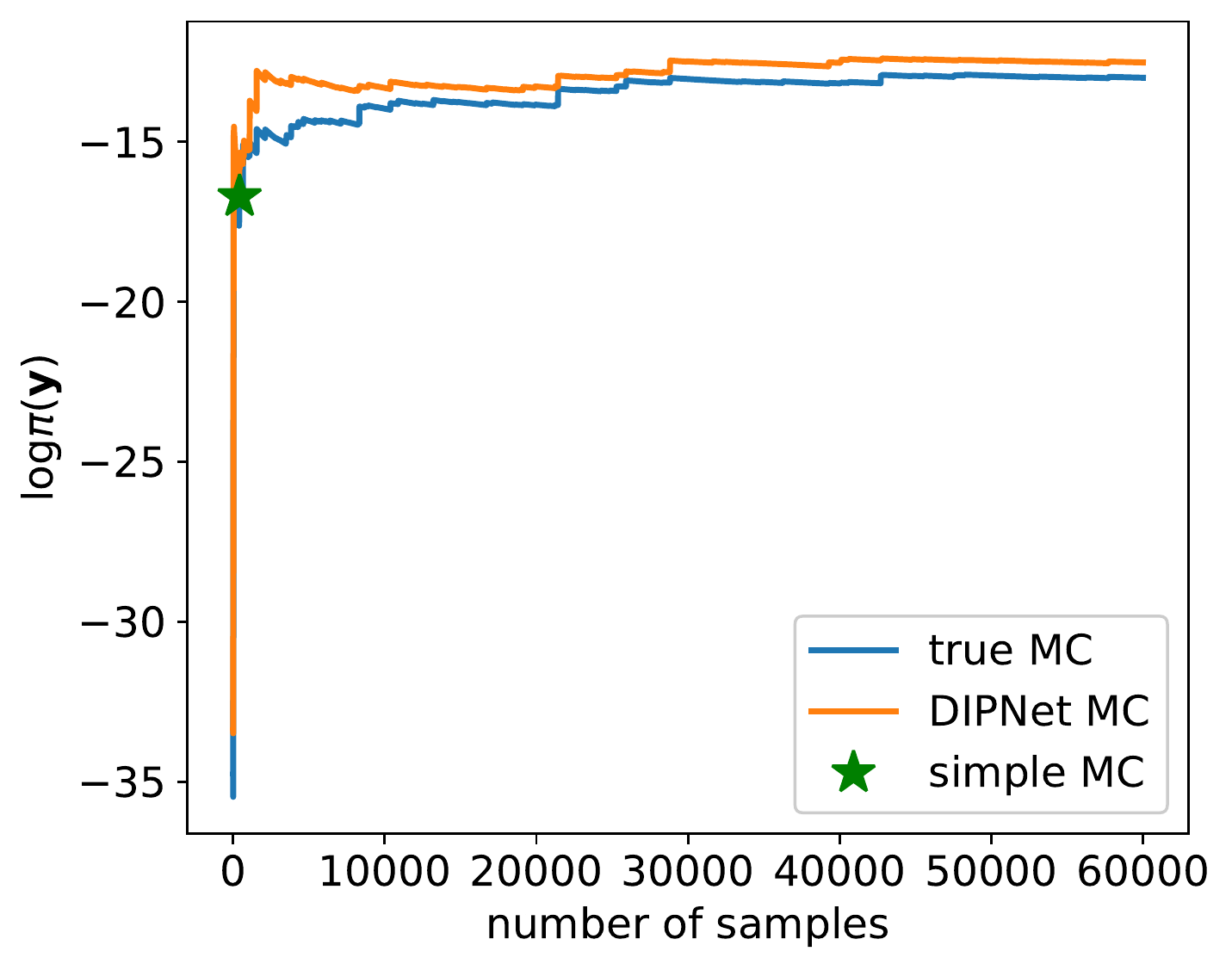}}
\caption{The approximation of the log normalization constant with increasing numbers of samples without (true MC) and with (DIPNet MC) surrogate at $3$ random designs. Green stars indicate $443$ samples (simple MC), having the same computational cost as DIPNet.}
\label{fig:c1}
\end{center}
\vskip -0.2in
\end{figure}

The left figure of Fig.\ \ref{fig:c2} shows the relative errors for $\log \pi(\obs)$ computed with the DIPNet surrogate and simple MC using $443$ samples based on the true MC with $60000$ samples, for $200$ random designs. We see again that DIPNet gives better accuracy with less bias than simple MC. 

 Fig.\ \ref{fig:c3} shows the EIG approximations of three random designs with increasing number of outer loop samples $n_{\text{out}}$ using the DIPNet MC with $60000$ (inner loop) samples, simple MC with $443$ samples, and true MC with $60000$ samples. We can see that the values of DIPNet MC are quite close to the true MC, while simple MC is far off. Relative errors of the EIG $\Psi$ ($n_{\text{out}} = 100$) computed with the DIPNet surrogate, the simple MC for  $200$ random designs
is given in the middle figure of Fig.\ \ref{fig:c2}. With the DIPNet DLMC $\Psi^{nn}$, we can use the greedy algorithm to find the optimal designs. The DIPNet greedy designs are presented as the pink crosses in the right figure of Fig.\ \ref{fig:c2}. We can see that the designs chosen by the greedy algorithm have much larger EIG values than all $200$ random designs.

\begin{figure}[ht]
\vskip 0.2in
\begin{center}
\centerline{\includegraphics[width=.33\columnwidth]{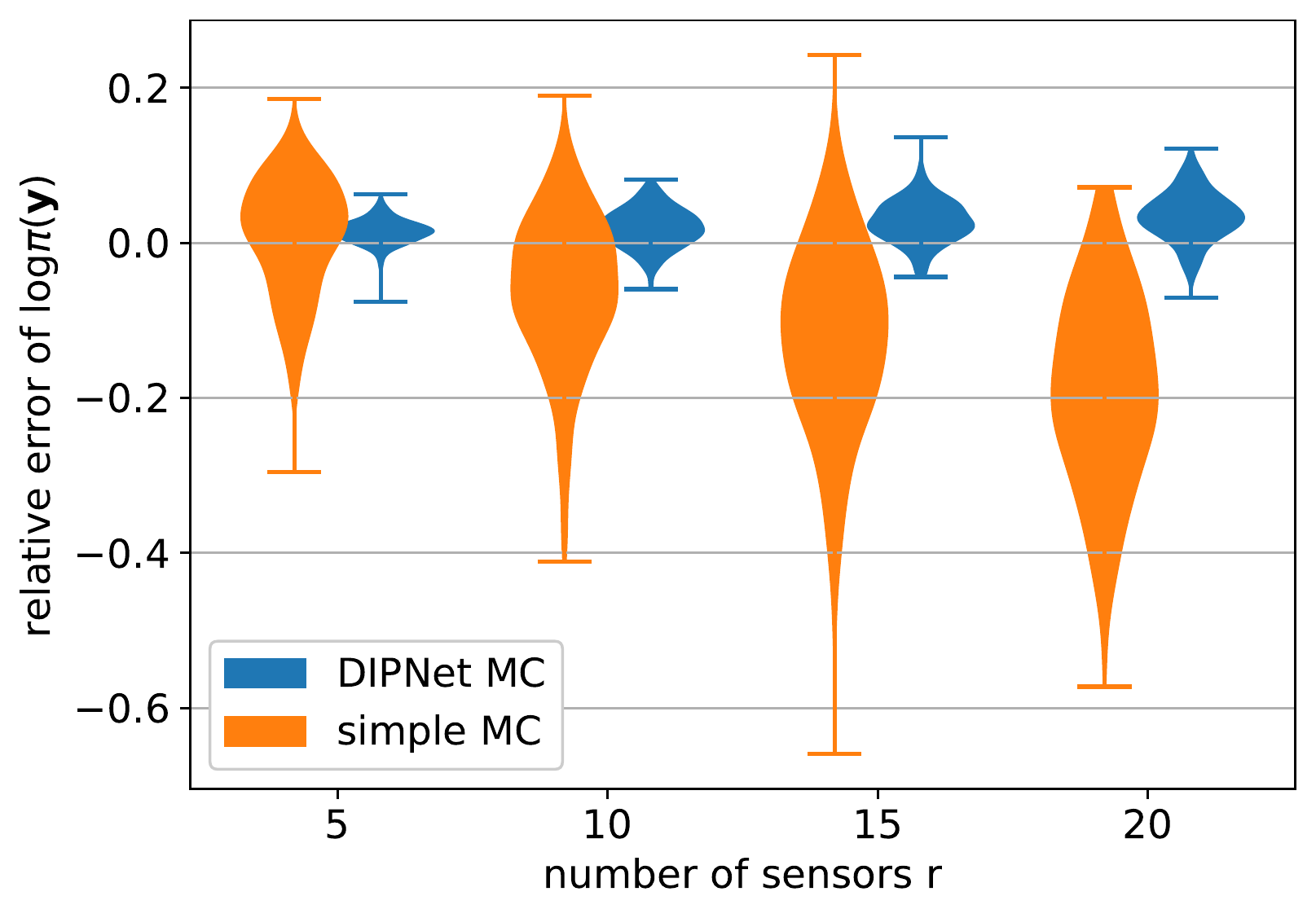}\includegraphics[width=.33\columnwidth]{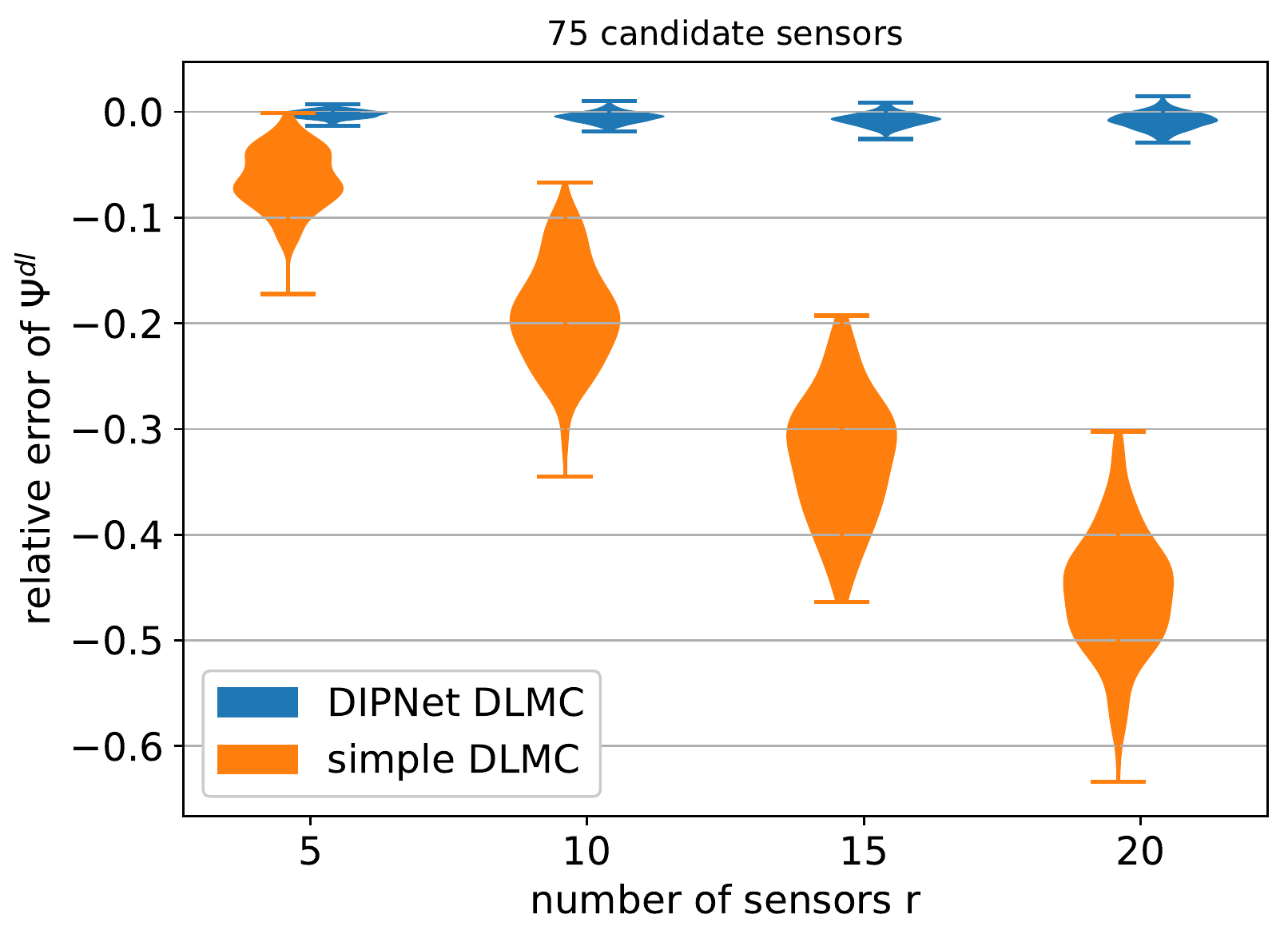}\includegraphics[width=.33\columnwidth]{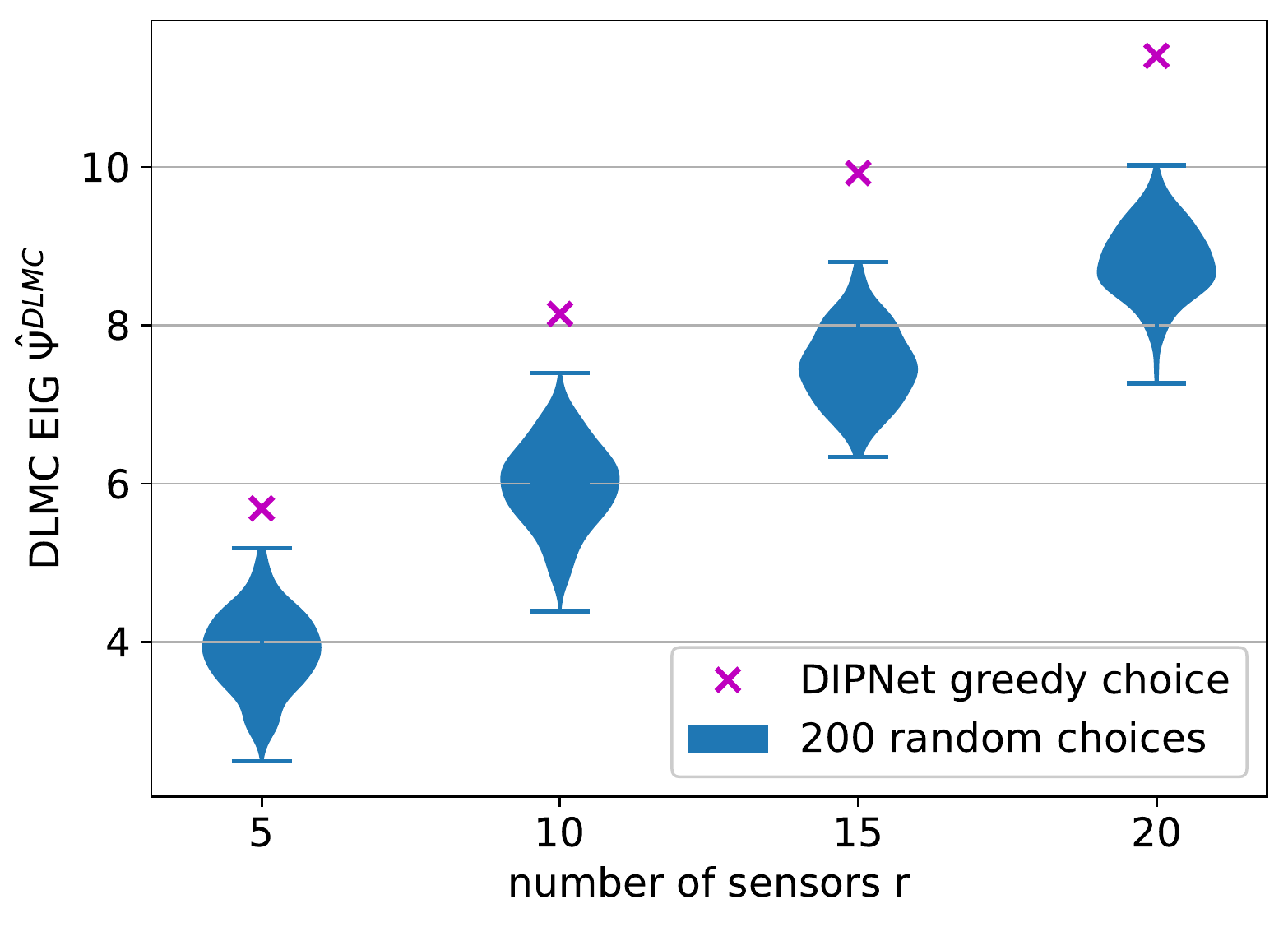}}
\caption{Sample distributions of the relative errors (compared to the truth MC) in approximating the log normalization constant $\log\pi(\obs)$ (left) and EIG $\Psi$ (middle) by DIPNet MC and simple MC with different number of sensors $r$; 
Right: Blue filled areas represent the sample distributions of the true DLMC EIG $\Psi^{dl}$ for 200 random designs. Pink crosses are the true DLMC EIG $\Psi^{dl}$ of designs chosen by the greedy optimization using DIPNet surrogates. }
\label{fig:c2}
\end{center}
\vskip -0.2in
\end{figure}

\begin{figure}[ht]
\vskip 0.2in
\begin{center}
\centerline{\includegraphics[width=.33\columnwidth]{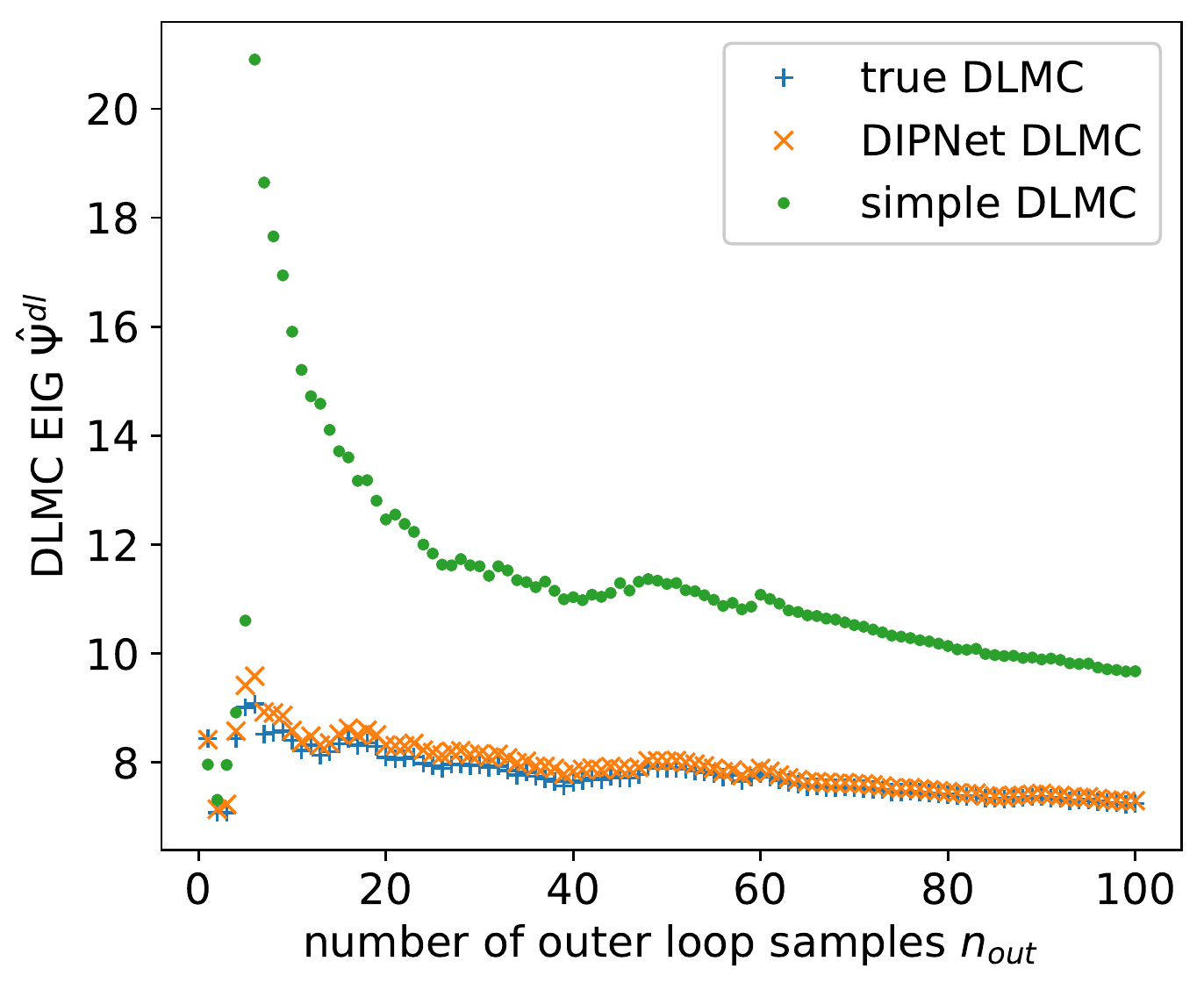}\includegraphics[width=.33\columnwidth]{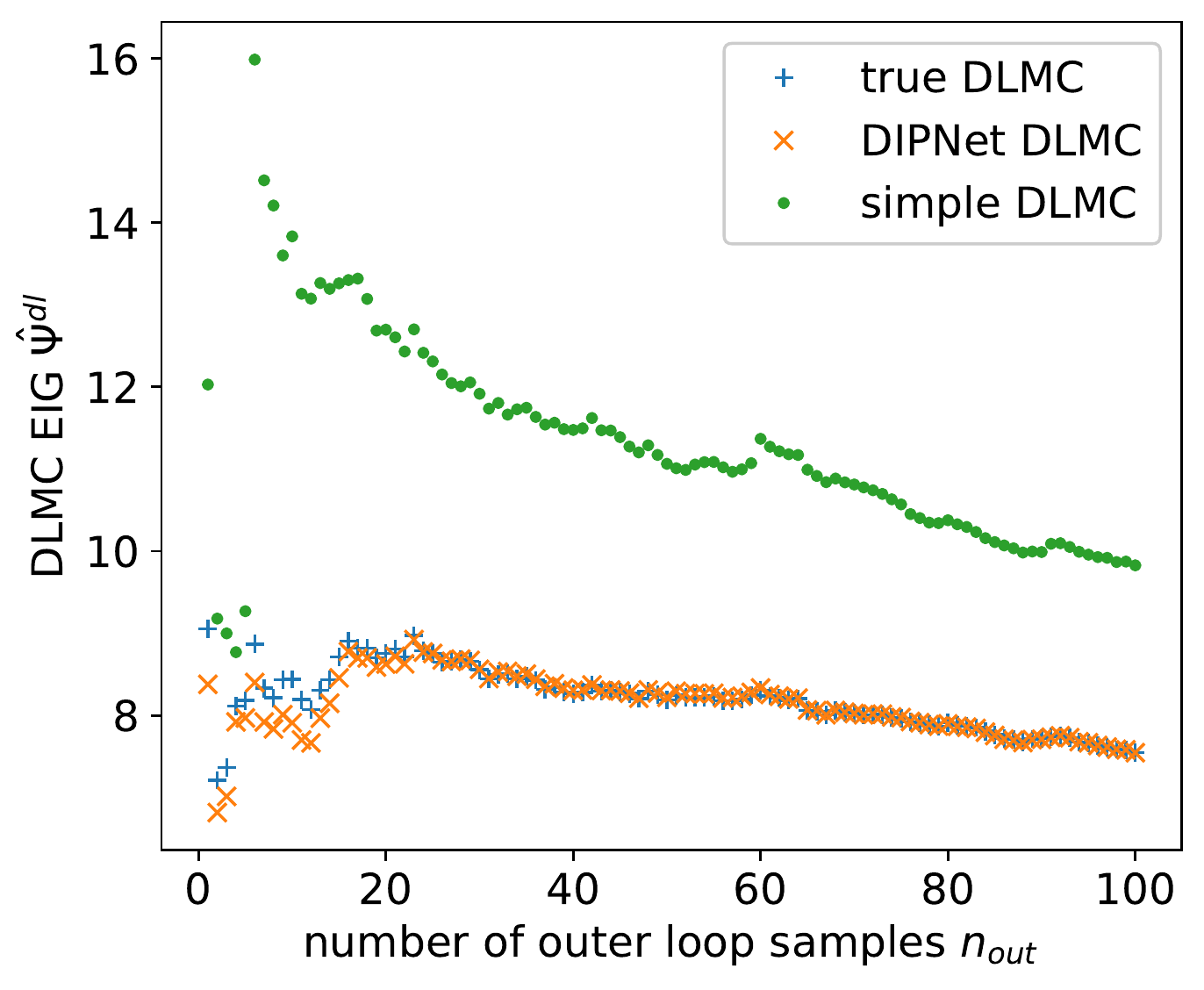}\includegraphics[width=.33\columnwidth]{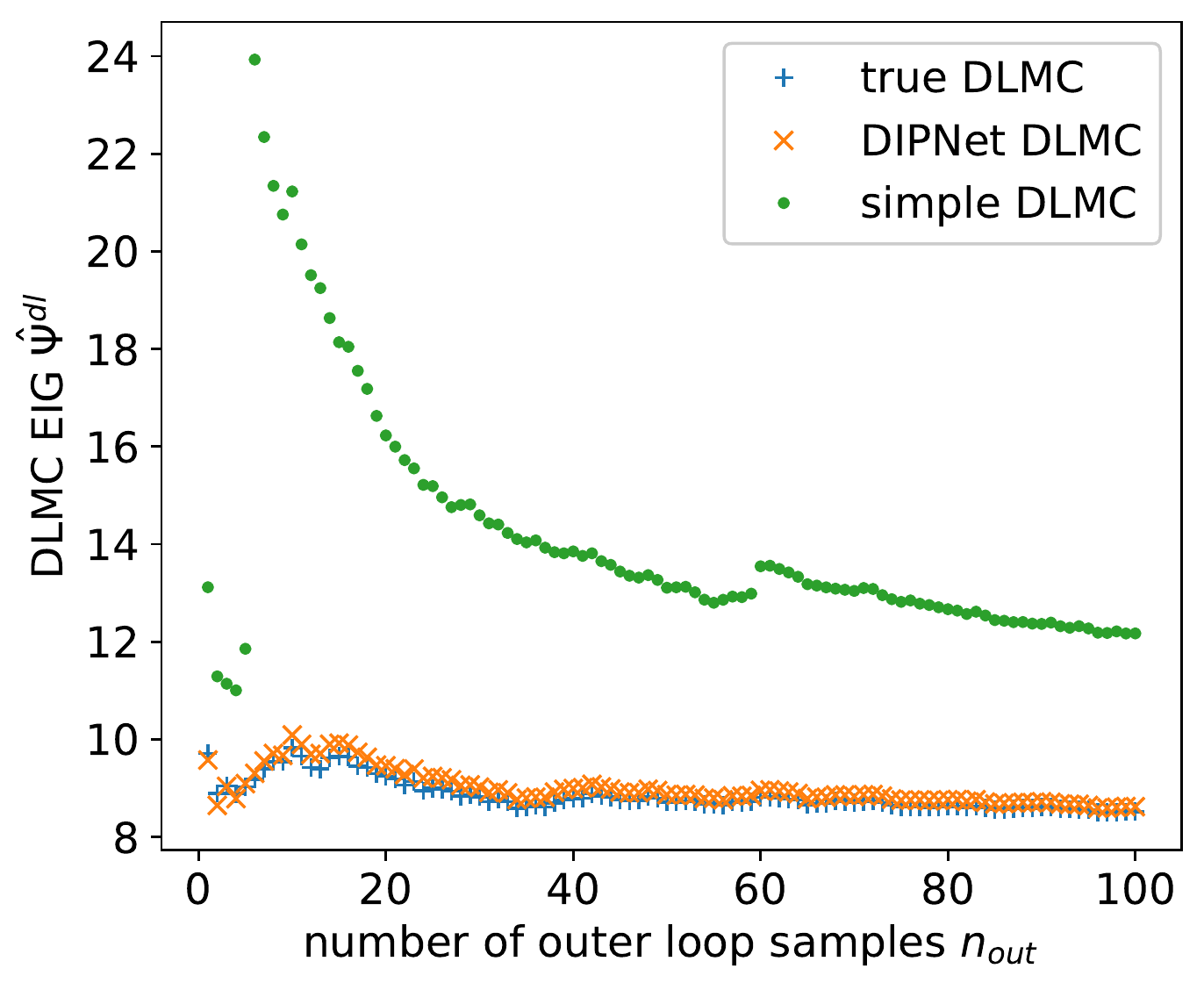}}
\caption{EIG of true DLMC ($n_{\text{in}} = 60000$), DIPNet DLMC ($n_{\text{in}} = 60000$) and simple DLMC ($n_{\text{in}} = 443$) with increasing number of outer loop samples $n_{\text{out}}$ at $3$ random designs.}
\label{fig:c3}
\end{center}
\vskip -0.2in
\end{figure}



\section{Conclusions}\label{sec:conclusions}

We have developed a computational method based on DIPNet surrogates for solving large-scale PDE-constrained Bayesian OED problems to determine optimal sensor locations (using the EIG criterion) to best infer infinite-dimensional parameters. We exploited the intrinsic low dimensionality of the parameter and data spaces and constructed a DIPNet surrogate for the parameter-to-observable map. The surrogate was used repeatedly in the evaluation of the normalization constant and the EIG. We presented error analysis for the approximation of the normalization constant and the EIG, showing that the errors are of the same order as the DIPNet RMS approximation error. Moreover, we used a greedy algorithm to solve the combinatorial optimization problem for sensor selection. The computational efficiency and accuracy of our approach are demonstrated by two numerical experiments. Future work will focus on gradient-based optimization also using the derivative information of the DIPNet w.r.t.\ both the parameter and the design variables, on the use of different optimality criteria such as A-optimality or D-optimality, and on exploring new network architectures for intrinsically high-dimensional Bayesian OED problems.

\bibliography{tom,references}


\begin{thebibliography}{66}
\ifx \bisbn   \undefined \def \bisbn  #1{ISBN #1}\fi
\ifx \binits  \undefined \def \binits#1{#1}\fi
\ifx \bauthor  \undefined \def \bauthor#1{#1}\fi
\ifx \batitle  \undefined \def \batitle#1{#1}\fi
\ifx \bjtitle  \undefined \def \bjtitle#1{#1}\fi
\ifx \bvolume  \undefined \def \bvolume#1{\textbf{#1}}\fi
\ifx \byear  \undefined \def \byear#1{#1}\fi
\ifx \bissue  \undefined \def \bissue#1{#1}\fi
\ifx \bfpage  \undefined \def \bfpage#1{#1}\fi
\ifx \blpage  \undefined \def \blpage #1{#1}\fi
\ifx \burl  \undefined \def \burl#1{\textsf{#1}}\fi
\ifx \doiurl  \undefined \def \doiurl#1{\url{https://doi.org/#1}}\fi
\ifx \betal  \undefined \def \betal{\textit{et al.}}\fi
\ifx \binstitute  \undefined \def \binstitute#1{#1}\fi
\ifx \binstitutionaled  \undefined \def \binstitutionaled#1{#1}\fi
\ifx \bctitle  \undefined \def \bctitle#1{#1}\fi
\ifx \beditor  \undefined \def \beditor#1{#1}\fi
\ifx \bpublisher  \undefined \def \bpublisher#1{#1}\fi
\ifx \bbtitle  \undefined \def \bbtitle#1{#1}\fi
\ifx \bedition  \undefined \def \bedition#1{#1}\fi
\ifx \bseriesno  \undefined \def \bseriesno#1{#1}\fi
\ifx \blocation  \undefined \def \blocation#1{#1}\fi
\ifx \bsertitle  \undefined \def \bsertitle#1{#1}\fi
\ifx \bsnm \undefined \def \bsnm#1{#1}\fi
\ifx \bsuffix \undefined \def \bsuffix#1{#1}\fi
\ifx \bparticle \undefined \def \bparticle#1{#1}\fi
\ifx \barticle \undefined \def \barticle#1{#1}\fi
\bibcommenthead
\ifx \bconfdate \undefined \def \bconfdate #1{#1}\fi
\ifx \botherref \undefined \def \botherref #1{#1}\fi
\ifx \url \undefined \def \url#1{\textsf{#1}}\fi
\ifx \bchapter \undefined \def \bchapter#1{#1}\fi
\ifx \bbook \undefined \def \bbook#1{#1}\fi
\ifx \bcomment \undefined \def \bcomment#1{#1}\fi
\ifx \oauthor \undefined \def \oauthor#1{#1}\fi
\ifx \citeauthoryear \undefined \def \citeauthoryear#1{#1}\fi
\ifx \endbibitem  \undefined \def \endbibitem {}\fi
\ifx \bconflocation  \undefined \def \bconflocation#1{#1}\fi
\ifx \arxivurl  \undefined \def \arxivurl#1{\textsf{#1}}\fi
\csname PreBibitemsHook\endcsname

\bibitem{Ucinski05}
\begin{bbook}
\bauthor{\bsnm{Uci{\'n}ski}, \binits{D.}}:
\bbtitle{Optimal Measurement Methods for Distributed Parameter System
  Identification}.
\bpublisher{CRC Press},
\blocation{Boca Raton}
(\byear{2005})
\end{bbook}
\endbibitem

\bibitem{LooseHeimbach21}
\begin{botherref}
\oauthor{\bsnm{Loose}, \binits{N.}},
\oauthor{\bsnm{Heimbach}, \binits{P.}}:
Leveraging uncertainty quantification to design ocean climate observing
  systems.
Journal of Advances in Modeling Earth Systems,
1--29
(2021)
\end{botherref}
\endbibitem

\bibitem{FerrolinoLopeMendoza20}
\begin{bchapter}
\bauthor{\bsnm{Ferrolino}, \binits{A.R.}},
\bauthor{\bsnm{Lope}, \binits{J.E.C.}},
\bauthor{\bsnm{Mendoza}, \binits{R.G.}}:
\bctitle{Optimal location of sensors for early detection of tsunami waves}.
In: \bbtitle{International Conference on Computational Science},
pp. \bfpage{562}--\blpage{575}
(\byear{2020}).
\bcomment{Springer}
\end{bchapter}
\endbibitem

\bibitem{HuanMarzouk13}
\begin{barticle}
\bauthor{\bsnm{Huan}, \binits{X.}},
\bauthor{\bsnm{Marzouk}, \binits{Y.M.}}:
\batitle{Simulation-based optimal {B}ayesian experimental design for nonlinear
  systems}.
\bjtitle{Journal of Computational Physics}
\bvolume{232}(\bissue{1}),
\bfpage{288}--\blpage{317}
(\byear{2013}).
\doiurl{10.1016/j.jcp.2012.08.013}
\end{barticle}
\endbibitem

\bibitem{HuanMarzouk14}
\begin{barticle}
\bauthor{\bsnm{Huan}, \binits{X.}},
\bauthor{\bsnm{Marzouk}, \binits{Y.M.}}:
\batitle{Gradient-based stochastic optimization methods in {B}ayesian
  experimental design}.
\bjtitle{International Journal for Uncertainty Quantification}
\bvolume{4}(\bissue{6}),
\bfpage{479}--\blpage{510}
(\byear{2014})
\end{barticle}
\endbibitem

\bibitem{HuanMarzouk16}
\begin{botherref}
\oauthor{\bsnm{Huan}, \binits{X.}},
\oauthor{\bsnm{Marzouk}, \binits{Y.M.}}:
Sequential bayesian optimal experimental design via approximate dynamic
  programming.
arXiv preprint arXiv:1604.08320
(2016)
\end{botherref}
\endbibitem

\bibitem{AlexanderianPetraStadlerEtAl16}
\begin{barticle}
\bauthor{\bsnm{Alexanderian}, \binits{A.}},
\bauthor{\bsnm{Petra}, \binits{N.}},
\bauthor{\bsnm{Stadler}, \binits{G.}},
\bauthor{\bsnm{Ghattas}, \binits{O.}}:
\batitle{A fast and scalable method for {A}-optimal design of experiments for
  infinite-dimensional {B}ayesian nonlinear inverse problems}.
\bjtitle{SIAM Journal on Scientific Computing}
\bvolume{38}(\bissue{1}),
\bfpage{243}--\blpage{272}
(\byear{2016}).
\doiurl{10.1137/140992564}
\end{barticle}
\endbibitem

\bibitem{BeckDiaEspathEtAl18}
\begin{barticle}
\bauthor{\bsnm{Beck}, \binits{J.}},
\bauthor{\bsnm{Dia}, \binits{B.M.}},
\bauthor{\bsnm{Espath}, \binits{L.F.}},
\bauthor{\bsnm{Long}, \binits{Q.}},
\bauthor{\bsnm{Tempone}, \binits{R.}}:
\batitle{Fast bayesian experimental design: Laplace-based importance sampling
  for the expected information gain}.
\bjtitle{Computer Methods in Applied Mechanics and Engineering}
\bvolume{334},
\bfpage{523}--\blpage{553}
(\byear{2018}).
\doiurl{10.1016/j.cma.2018.01.053}
\end{barticle}
\endbibitem

\bibitem{LongScavinoTemponeEtAl13a}
\begin{barticle}
\bauthor{\bsnm{Long}, \binits{Q.}},
\bauthor{\bsnm{Scavino}, \binits{M.}},
\bauthor{\bsnm{Tempone}, \binits{R.}},
\bauthor{\bsnm{Wang}, \binits{S.}}:
\batitle{Fast estimation of expected information gains for {B}ayesian
  experimental designs based on {L}aplace approximations}.
\bjtitle{Computer Methods in Applied Mechanics and Engineering}
\bvolume{259},
\bfpage{24}--\blpage{39}
(\byear{2013})
\end{barticle}
\endbibitem

\bibitem{LongMotamedTempone15}
\begin{barticle}
\bauthor{\bsnm{Long}, \binits{Q.}},
\bauthor{\bsnm{Motamed}, \binits{M.}},
\bauthor{\bsnm{Tempone}, \binits{R.}}:
\batitle{Fast bayesian optimal experimental design for seismic source
  inversion}.
\bjtitle{Computer Methods in Applied Mechanics and Engineering}
\bvolume{291},
\bfpage{123}--\blpage{145}
(\byear{2015}).
\doiurl{10.1016/j.cma.2015.03.021}
\end{barticle}
\endbibitem

\bibitem{BeckMansourDiaEspathEtAl20}
\begin{barticle}
\bauthor{\bsnm{Beck}, \binits{J.}},
\bauthor{\bsnm{Mansour~Dia}, \binits{B.}},
\bauthor{\bsnm{Espath}, \binits{L.}},
\bauthor{\bsnm{Tempone}, \binits{R.}}:
\batitle{Multilevel double loop {M}onte {C}arlo and stochastic collocation
  methods with importance sampling for {B}ayesian optimal experimental design}.
\bjtitle{International Journal for Numerical Methods in Engineering}
\bvolume{121}(\bissue{15}),
\bfpage{3482}--\blpage{3503}
(\byear{2020})
\end{barticle}
\endbibitem

\bibitem{AlexanderianPetraStadlerEtAl14}
\begin{barticle}
\bauthor{\bsnm{Alexanderian}, \binits{A.}},
\bauthor{\bsnm{Petra}, \binits{N.}},
\bauthor{\bsnm{Stadler}, \binits{G.}},
\bauthor{\bsnm{Ghattas}, \binits{O.}}:
\batitle{{A}-optimal design of experiments for infinite-dimensional {B}ayesian
  linear inverse problems with regularized $\ell_0$-sparsification}.
\bjtitle{SIAM Journal on Scientific Computing}
\bvolume{36}(\bissue{5}),
\bfpage{2122}--\blpage{2148}
(\byear{2014}).
\doiurl{10.1137/130933381}
\end{barticle}
\endbibitem

\bibitem{AlexanderianGloorGhattas16}
\begin{barticle}
\bauthor{\bsnm{Alexanderian}, \binits{A.}},
\bauthor{\bsnm{Gloor}, \binits{P.J.}},
\bauthor{\bsnm{Ghattas}, \binits{O.}}:
\batitle{On {B}ayesian {A}-and {D}-optimal experimental designs in infinite
  dimensions}.
\bjtitle{Bayesian Analysis}
\bvolume{11}(\bissue{3}),
\bfpage{671}--\blpage{695}
(\byear{2016}).
\doiurl{10.1214/15-BA969}
\end{barticle}
\endbibitem

\bibitem{SaibabaAlexanderianIpsen17}
\begin{barticle}
\bauthor{\bsnm{Saibaba}, \binits{A.K.}},
\bauthor{\bsnm{Alexanderian}, \binits{A.}},
\bauthor{\bsnm{Ipsen}, \binits{I.C.}}:
\batitle{Randomized matrix-free trace and log-determinant estimators}.
\bjtitle{Numerische Mathematik}
\bvolume{137}(\bissue{2}),
\bfpage{353}--\blpage{395}
(\byear{2017})
\end{barticle}
\endbibitem

\bibitem{CrestelAlexanderianStadlerEtAl17}
\begin{barticle}
\bauthor{\bsnm{Crestel}, \binits{B.}},
\bauthor{\bsnm{Alexanderian}, \binits{A.}},
\bauthor{\bsnm{Stadler}, \binits{G.}},
\bauthor{\bsnm{Ghattas}, \binits{O.}}:
\batitle{{A}-optimal encoding weights for nonlinear inverse problems, with
  application to the {H}elmholtz inverse problem}.
\bjtitle{Inverse Problems}
\bvolume{33}(\bissue{7}),
\bfpage{074008}
(\byear{2017})
\end{barticle}
\endbibitem

\bibitem{AttiaAlexanderianSaibaba18}
\begin{barticle}
\bauthor{\bsnm{Attia}, \binits{A.}},
\bauthor{\bsnm{Alexanderian}, \binits{A.}},
\bauthor{\bsnm{Saibaba}, \binits{A.K.}}:
\batitle{Goal-oriented optimal design of experiments for large-scale {B}ayesian
  linear inverse problems}.
\bjtitle{Inverse Problems}
\bvolume{34}(\bissue{9}),
\bfpage{095009}
(\byear{2018})
\end{barticle}
\endbibitem

\bibitem{WuChenGhattas20}
\begin{botherref}
\oauthor{\bsnm{Wu}, \binits{K.}},
\oauthor{\bsnm{Chen}, \binits{P.}},
\oauthor{\bsnm{Ghattas}, \binits{O.}}:
A fast and scalable computational framework for large-scale and
  high-dimensional {B}ayesian optimal experimental design.
arXiv preprint arXiv:2010.15196, to appear in SIAM Journal on Scientific
  Computing
(2020)
\end{botherref}
\endbibitem

\bibitem{WuChenGhattas21}
\begin{botherref}
\oauthor{\bsnm{Wu}, \binits{K.}},
\oauthor{\bsnm{Chen}, \binits{P.}},
\oauthor{\bsnm{Ghattas}, \binits{O.}}:
An efficient method for goal-oriented linear bayesian optimal experimental
  design: Application to optimal sensor placement.
arXiv preprint arXiv:2102.06627, to appear in SIAM/AMS Journal on Uncertainty
  Quantification
(2021)
\end{botherref}
\endbibitem

\bibitem{Aretz-NellesenChenGreplEtAl20}
\begin{botherref}
\oauthor{\bsnm{Aretz-Nellesen}, \binits{N.}},
\oauthor{\bsnm{Chen}, \binits{P.}},
\oauthor{\bsnm{Grepl}, \binits{M.A.}},
\oauthor{\bsnm{Veroy}, \binits{K.}}:
A-optimal experimental design for hyper-parameterized linear {B}ayesian inverse
  problems.
Numerical Mathematics and Advanced Applications ENUMATH 2020
(2020)
\end{botherref}
\endbibitem

\bibitem{AretzChenVeroy21}
\begin{barticle}
\bauthor{\bsnm{Aretz}, \binits{N.}},
\bauthor{\bsnm{Chen}, \binits{P.}},
\bauthor{\bsnm{Veroy}, \binits{K.}}:
\batitle{Sensor selection for hyper-parameterized linear {B}ayesian inverse
  problems}.
\bjtitle{PAMM}
\bvolume{20}(\bissue{S1}),
\bfpage{202000357}
(\byear{2021})
\end{barticle}
\endbibitem

\bibitem{FosterJankowiakBinghamEtAl19}
\begin{bchapter}
\bauthor{\bsnm{Foster}, \binits{A.}},
\bauthor{\bsnm{Jankowiak}, \binits{M.}},
\bauthor{\bsnm{Bingham}, \binits{E.}},
\bauthor{\bsnm{Horsfall}, \binits{P.}},
\bauthor{\bsnm{Teh}, \binits{Y.W.}},
\bauthor{\bsnm{Rainforth}, \binits{T.}},
\bauthor{\bsnm{Goodman}, \binits{N.}}:
\bctitle{Variational {B}ayesian optimal experimental design}.
In: \beditor{\bsnm{Wallach}, \binits{H.}},
\beditor{\bsnm{Larochelle}, \binits{H.}},
\beditor{\bsnm{Beygelzimer}, \binits{A.}},
\beditor{\bparticle{d\textquotesingle} \bsnm{Alch\'{e}-Buc}, \binits{F.}},
\beditor{\bsnm{Fox}, \binits{E.}},
\beditor{\bsnm{Garnett}, \binits{R.}} (eds.)
\bbtitle{Advances in Neural Information Processing Systems},
vol. \bseriesno{32},
pp. \bfpage{14036}--\blpage{14047}.
\bpublisher{Curran Associates, Inc.}, \blocation{???}
(\byear{2019}).
\burl{https://proceedings.neurips.cc/paper/2019/file/d55cbf210f175f4a37916eafe6c04f0d-Paper.pdf}
\end{bchapter}
\endbibitem

\bibitem{KleinegesseGutmann20}
\begin{bchapter}
\bauthor{\bsnm{Kleinegesse}, \binits{S.}},
\bauthor{\bsnm{Gutmann}, \binits{M.U.}}:
\bctitle{Bayesian experimental design for implicit models by mutual information
  neural estimation}.
In: \bbtitle{International Conference on Machine Learning},
pp. \bfpage{5316}--\blpage{5326}
(\byear{2020}).
\bcomment{PMLR}
\end{bchapter}
\endbibitem

\bibitem{ShenHuan21}
\begin{botherref}
\oauthor{\bsnm{Shen}, \binits{W.}},
\oauthor{\bsnm{Huan}, \binits{X.}}:
Bayesian sequential optimal experimental design for nonlinear models using
  policy gradient reinforcement learning.
arXiv preprint arXiv:2110.15335
(2021)
\end{botherref}
\endbibitem

\bibitem{OLearyRoseberryVillaChenEtAl2022}
\begin{barticle}
\bauthor{\bsnm{O’Leary-Roseberry}, \binits{T.}},
\bauthor{\bsnm{Villa}, \binits{U.}},
\bauthor{\bsnm{Chen}, \binits{P.}},
\bauthor{\bsnm{Ghattas}, \binits{O.}}:
\batitle{Derivative-informed projected neural networks for high-dimensional
  parametric maps governed by pdes}.
\bjtitle{Computer Methods in Applied Mechanics and Engineering}
\bvolume{388},
\bfpage{114199}
(\byear{2022})
\end{barticle}
\endbibitem

\bibitem{OLearyRoseberry2020}
\begin{botherref}
\oauthor{\bsnm{O'Leary-Roseberry}, \binits{T.}}:
Efficient and dimension independent methods for neural network surrogate
  construction and training.
PhD thesis,
The University of Texas at Austin
(2020)
\end{botherref}
\endbibitem

\bibitem{OLearyRoseberryDuChaudhuriEtAl2021}
\begin{botherref}
\oauthor{\bsnm{O'Leary-Roseberry}, \binits{T.}},
\oauthor{\bsnm{Du}, \binits{X.}},
\oauthor{\bsnm{Chaudhuri}, \binits{A.}},
\oauthor{\bsnm{Martins}, \binits{J.R.}},
\oauthor{\bsnm{Willcox}, \binits{K.}},
\oauthor{\bsnm{Ghattas}, \binits{O.}}:
Adaptive projected residual networks for learning parametric maps from sparse
  data.
arXiv preprint arXiv:2112.07096
(2021)
\end{botherref}
\endbibitem

\bibitem{FlathWilcoxAkcelikEtAl11}
\begin{barticle}
\bauthor{\bsnm{Flath}, \binits{P.H.}},
\bauthor{\bsnm{Wilcox}, \binits{L.C.}},
\bauthor{\bsnm{Ak\c{c}elik}, \binits{V.}},
\bauthor{\bsnm{Hill}, \binits{J.}},
\bauthor{\bparticle{van} \bsnm{Bloemen~Waanders}, \binits{B.}},
\bauthor{\bsnm{Ghattas}, \binits{O.}}:
\batitle{Fast algorithms for {B}ayesian uncertainty quantification in
  large-scale linear inverse problems based on low-rank partial {H}essian
  approximations}.
\bjtitle{SIAM Journal on Scientific Computing}
\bvolume{33}(\bissue{1}),
\bfpage{407}--\blpage{432}
(\byear{2011}).
\doiurl{10.1137/090780717}
\end{barticle}
\endbibitem

\bibitem{Bui-ThanhBursteddeGhattasEtAl12}
\begin{bchapter}
\bauthor{\bsnm{Bui-Thanh}, \binits{T.}},
\bauthor{\bsnm{Burstedde}, \binits{C.}},
\bauthor{\bsnm{Ghattas}, \binits{O.}},
\bauthor{\bsnm{Martin}, \binits{J.}},
\bauthor{\bsnm{Stadler}, \binits{G.}},
\bauthor{\bsnm{Wilcox}, \binits{L.C.}}:
\bctitle{{Extreme-scale UQ for Bayesian inverse problems governed by PDEs}}.
In: \bbtitle{SC12: Proceedings of the International Conference for High
  Performance Computing, Networking, Storage and Analysis}
(\byear{2012})
\end{bchapter}
\endbibitem

\bibitem{Bui-ThanhGhattasMartinEtAl13}
\begin{barticle}
\bauthor{\bsnm{Bui-Thanh}, \binits{T.}},
\bauthor{\bsnm{Ghattas}, \binits{O.}},
\bauthor{\bsnm{Martin}, \binits{J.}},
\bauthor{\bsnm{Stadler}, \binits{G.}}:
\batitle{A computational framework for infinite-dimensional {B}ayesian inverse
  problems {P}art {I}: {T}he linearized case, with application to global
  seismic inversion}.
\bjtitle{SIAM Journal on Scientific Computing}
\bvolume{35}(\bissue{6}),
\bfpage{2494}--\blpage{2523}
(\byear{2013}).
\doiurl{10.1137/12089586X}
\end{barticle}
\endbibitem

\bibitem{Bui-ThanhGhattas14}
\begin{barticle}
\bauthor{\bsnm{Bui-Thanh}, \binits{T.}},
\bauthor{\bsnm{Ghattas}, \binits{O.}}:
\batitle{An analysis of infinite dimensional {B}ayesian inverse shape acoustic
  scattering and its numerical approximation}.
\bjtitle{SIAM/ASA Journal of Uncertainty Quantification}
\bvolume{2}(\bissue{1}),
\bfpage{203}--\blpage{222}
(\byear{2014}).
\doiurl{10.1137/120894877}
\end{barticle}
\endbibitem

\bibitem{KalmikovHeimbach14}
\begin{barticle}
\bauthor{\bsnm{Kalmikov}, \binits{A.G.}},
\bauthor{\bsnm{Heimbach}, \binits{P.}}:
\batitle{A {H}essian-based method for uncertainty quantification in global
  ocean state estimation}.
\bjtitle{SIAM Journal on Scientific Computing}
\bvolume{36}(\bissue{5}),
\bfpage{267}--\blpage{295}
(\byear{2014})
\end{barticle}
\endbibitem

\bibitem{HesseStadler14}
\begin{barticle}
\bauthor{\bsnm{Hesse}, \binits{M.}},
\bauthor{\bsnm{Stadler}, \binits{G.}}:
\batitle{Joint inversion in coupled quasistatic poroelasticity}.
\bjtitle{Journal of Geophysical Research: Solid Earth}
\bvolume{119}(\bissue{2}),
\bfpage{1425}--\blpage{1445}
(\byear{2014})
\end{barticle}
\endbibitem

\bibitem{IsaacPetraStadlerEtAl15}
\begin{barticle}
\bauthor{\bsnm{Isaac}, \binits{T.}},
\bauthor{\bsnm{Petra}, \binits{N.}},
\bauthor{\bsnm{Stadler}, \binits{G.}},
\bauthor{\bsnm{Ghattas}, \binits{O.}}:
\batitle{Scalable and efficient algorithms for the propagation of uncertainty
  from data through inference to prediction for large-scale problems, with
  application to flow of the {A}ntarctic ice sheet}.
\bjtitle{Journal of Computational Physics}
\bvolume{296},
\bfpage{348}--\blpage{368}
(\byear{2015}).
\doiurl{10.1016/j.jcp.2015.04.047}
\end{barticle}
\endbibitem

\bibitem{CuiLawMarzouk16}
\begin{barticle}
\bauthor{\bsnm{Cui}, \binits{T.}},
\bauthor{\bsnm{Law}, \binits{K.J.H.}},
\bauthor{\bsnm{Marzouk}, \binits{Y.M.}}:
\batitle{Dimension-independent likelihood-informed {MCMC}}.
\bjtitle{Journal of Computational Physics}
\bvolume{304},
\bfpage{109}--\blpage{137}
(\byear{2016})
\end{barticle}
\endbibitem

\bibitem{ChenVillaGhattas17}
\begin{barticle}
\bauthor{\bsnm{Chen}, \binits{P.}},
\bauthor{\bsnm{Villa}, \binits{U.}},
\bauthor{\bsnm{Ghattas}, \binits{O.}}:
\batitle{Hessian-based adaptive sparse quadrature for infinite-dimensional
  {B}ayesian inverse problems}.
\bjtitle{Computer Methods in Applied Mechanics and Engineering}
\bvolume{327},
\bfpage{147}--\blpage{172}
(\byear{2017})
\end{barticle}
\endbibitem

\bibitem{BeskosGirolamiLanEtAl17}
\begin{barticle}
\bauthor{\bsnm{Beskos}, \binits{A.}},
\bauthor{\bsnm{Girolami}, \binits{M.}},
\bauthor{\bsnm{Lan}, \binits{S.}},
\bauthor{\bsnm{Farrell}, \binits{P.E.}},
\bauthor{\bsnm{Stuart}, \binits{A.M.}}:
\batitle{Geometric {MCMC} for infinite-dimensional inverse problems}.
\bjtitle{Journal of Computational Physics}
\bvolume{335},
\bfpage{327}--\blpage{351}
(\byear{2017})
\end{barticle}
\endbibitem

\bibitem{ZahmCuiLawEtAl18}
\begin{botherref}
\oauthor{\bsnm{Zahm}, \binits{O.}},
\oauthor{\bsnm{Cui}, \binits{T.}},
\oauthor{\bsnm{Law}, \binits{K.}},
\oauthor{\bsnm{Spantini}, \binits{A.}},
\oauthor{\bsnm{Marzouk}, \binits{Y.}}:
Certified dimension reduction in nonlinear {B}ayesian inverse problems.
arXiv preprint arXiv:1807.03712
(2018)
\end{botherref}
\endbibitem

\bibitem{BrennanBigoniZahmEtAl20}
\begin{botherref}
\oauthor{\bsnm{Brennan}, \binits{M.}},
\oauthor{\bsnm{Bigoni}, \binits{D.}},
\oauthor{\bsnm{Zahm}, \binits{O.}},
\oauthor{\bsnm{Spantini}, \binits{A.}},
\oauthor{\bsnm{Marzouk}, \binits{Y.}}:
Greedy inference with structure-exploiting lazy maps.
Advances in Neural Information Processing Systems
\textbf{33}
(2020)
\end{botherref}
\endbibitem

\bibitem{ChenWuChenEtAl19}
\begin{botherref}
\oauthor{\bsnm{Chen}, \binits{P.}},
\oauthor{\bsnm{Wu}, \binits{K.}},
\oauthor{\bsnm{Chen}, \binits{J.}},
\oauthor{\bsnm{O'Leary-Roseberry}, \binits{T.}},
\oauthor{\bsnm{Ghattas}, \binits{O.}}:
Projected {S}tein variational {N}ewton: {A} fast and scalable {B}ayesian
  inference method in high dimensions.
Advances in Neural Information Processing Systems
(2019)
\end{botherref}
\endbibitem

\bibitem{ChenGhattas20}
\begin{bchapter}
\bauthor{\bsnm{Chen}, \binits{P.}},
\bauthor{\bsnm{Ghattas}, \binits{O.}}:
\bctitle{Projected {S}tein variational gradient descent}.
In: \bbtitle{Advances in Neural Information Processing Systems}
(\byear{2020})
\end{bchapter}
\endbibitem

\bibitem{SubramanianScheufeleMehlEtAl20}
\begin{barticle}
\bauthor{\bsnm{Subramanian}, \binits{S.}},
\bauthor{\bsnm{Scheufele}, \binits{K.}},
\bauthor{\bsnm{Mehl}, \binits{M.}},
\bauthor{\bsnm{Biros}, \binits{G.}}:
\batitle{Where did the tumor start? {A}n inverse solver with sparse
  localization for tumor growth models}.
\bjtitle{Inverse Problems}
\bvolume{36}(\bissue{4}),
\bfpage{045006}
(\byear{2020}).
\doiurl{10.1088/1361-6420/ab649c}
\end{barticle}
\endbibitem

\bibitem{BabaniyiNicholsonVillaEtAl21}
\begin{barticle}
\bauthor{\bsnm{Babaniyi}, \binits{O.}},
\bauthor{\bsnm{Nicholson}, \binits{R.}},
\bauthor{\bsnm{Villa}, \binits{U.}},
\bauthor{\bsnm{Petra}, \binits{N.}}:
\batitle{Inferring the basal sliding coefficient field for the {S}tokes ice
  sheet model under rheological uncertainty}.
\bjtitle{The Cryosphere}
\bvolume{15}(\bissue{4}),
\bfpage{1731}--\blpage{1750}
(\byear{2021})
\end{barticle}
\endbibitem

\bibitem{GhattasWillcox21}
\begin{barticle}
\bauthor{\bsnm{Ghattas}, \binits{O.}},
\bauthor{\bsnm{Willcox}, \binits{K.}}:
\batitle{Learning physics-based models from data: perspectives from inverse
  problems and model reduction}.
\bjtitle{Acta Numerica}
\bvolume{30},
\bfpage{445}--\blpage{554}
(\byear{2021}).
\doiurl{10.1017/S0962492921000064}
\end{barticle}
\endbibitem

\bibitem{Stuart10}
\begin{barticle}
\bauthor{\bsnm{Stuart}, \binits{A.M.}}:
\batitle{Inverse problems: {A B}ayesian perspective}.
\bjtitle{Acta Numerica}
\bvolume{19},
\bfpage{451}--\blpage{559}
(\byear{2010}).
\doiurl{10.1017/S0962492910000061}
\end{barticle}
\endbibitem

\bibitem{PetraMartinStadlerEtAl14}
\begin{barticle}
\bauthor{\bsnm{Petra}, \binits{N.}},
\bauthor{\bsnm{Martin}, \binits{J.}},
\bauthor{\bsnm{Stadler}, \binits{G.}},
\bauthor{\bsnm{Ghattas}, \binits{O.}}:
\batitle{A computational framework for infinite-dimensional {B}ayesian inverse
  problems: {P}art {II}. {S}tochastic {N}ewton {MCMC} with application to ice
  sheet flow inverse problems}.
\bjtitle{SIAM Journal on Scientific Computing}
\bvolume{36}(\bissue{4}),
\bfpage{1525}--\blpage{1555}
(\byear{2014})
\end{barticle}
\endbibitem

\bibitem{BhattacharyaHosseiniKovachki2020}
\begin{botherref}
\oauthor{\bsnm{Bhattacharya}, \binits{K.}},
\oauthor{\bsnm{Hosseini}, \binits{B.}},
\oauthor{\bsnm{Kovachki}, \binits{N.B.}},
\oauthor{\bsnm{Stuart}, \binits{A.M.}}:
Model reduction and neural networks for parametric pdes.
arXiv preprint arXiv:2005.03180
(2020)
\end{botherref}
\endbibitem

\bibitem{FrescaManzoni2022}
\begin{barticle}
\bauthor{\bsnm{Fresca}, \binits{S.}},
\bauthor{\bsnm{Manzoni}, \binits{A.}}:
\batitle{{POD-DL-ROM}: enhancing deep learning-based reduced order models for
  nonlinear parametrized pdes by proper orthogonal decomposition}.
\bjtitle{Computer Methods in Applied Mechanics and Engineering}
\bvolume{388},
\bfpage{114181}
(\byear{2022})
\end{barticle}
\endbibitem

\bibitem{KovachkiLiLiuEtAl2021}
\begin{botherref}
\oauthor{\bsnm{Kovachki}, \binits{N.}},
\oauthor{\bsnm{Li}, \binits{Z.}},
\oauthor{\bsnm{Liu}, \binits{B.}},
\oauthor{\bsnm{Azizzadenesheli}, \binits{K.}},
\oauthor{\bsnm{Bhattacharya}, \binits{K.}},
\oauthor{\bsnm{Stuart}, \binits{A.}},
\oauthor{\bsnm{Anandkumar}, \binits{A.}}:
Neural operator: Learning maps between function spaces.
arXiv preprint arXiv:2108.08481
(2021)
\end{botherref}
\endbibitem

\bibitem{LiKovachkiAzizzadenesheliEtAl2020b}
\begin{botherref}
\oauthor{\bsnm{Li}, \binits{Z.}},
\oauthor{\bsnm{Kovachki}, \binits{N.}},
\oauthor{\bsnm{Azizzadenesheli}, \binits{K.}},
\oauthor{\bsnm{Liu}, \binits{B.}},
\oauthor{\bsnm{Bhattacharya}, \binits{K.}},
\oauthor{\bsnm{Stuart}, \binits{A.}},
\oauthor{\bsnm{Anandkumar}, \binits{A.}}:
Neural operator: Graph kernel network for partial differential equations.
arXiv preprint arXiv:2003.03485
(2020)
\end{botherref}
\endbibitem

\bibitem{LuJinKarniadakis2019}
\begin{botherref}
\oauthor{\bsnm{Lu}, \binits{L.}},
\oauthor{\bsnm{Jin}, \binits{P.}},
\oauthor{\bsnm{Karniadakis}, \binits{G.E.}}:
Deeponet: Learning nonlinear operators for identifying differential equations
  based on the universal approximation theorem of operators.
arXiv preprint arXiv:1910.03193
(2019)
\end{botherref}
\endbibitem

\bibitem{OLearyRoseberryChenVillaEtAl2022}
\begin{botherref}
\oauthor{\bsnm{O'Leary-Roseberry}, \binits{T.}},
\oauthor{\bsnm{Chen}, \binits{P.}},
\oauthor{\bsnm{Villa}, \binits{U.}},
\oauthor{\bsnm{Ghattas}, \binits{O.}}:
{Derivative-Informed Neural Operator: An Efficient Framework for
  High-Dimensional Parametric Derivative Learning}.
arXiv preprint arXiv:2206.10745
(2022)
\end{botherref}
\endbibitem

\bibitem{NelsenStuart21}
\begin{barticle}
\bauthor{\bsnm{Nelsen}, \binits{N.H.}},
\bauthor{\bsnm{Stuart}, \binits{A.M.}}:
\batitle{The random feature model for input-output maps between banach spaces}.
\bjtitle{SIAM Journal on Scientific Computing}
\bvolume{43}(\bissue{5}),
\bfpage{3212}--\blpage{3243}
(\byear{2021})
\end{barticle}
\endbibitem

\bibitem{NguyenBui2021model}
\begin{botherref}
\oauthor{\bsnm{Nguyen}, \binits{H.V.}},
\oauthor{\bsnm{Bui-Thanh}, \binits{T.}}:
Model-constrained deep learning approaches for inverse problems.
arXiv preprint arXiv:2105.12033
(2021)
\end{botherref}
\endbibitem

\bibitem{ZahmConstantinePrieurEtAl2020}
\begin{barticle}
\bauthor{\bsnm{Zahm}, \binits{O.}},
\bauthor{\bsnm{Constantine}, \binits{P.G.}},
\bauthor{\bsnm{Prieur}, \binits{C.}},
\bauthor{\bsnm{Marzouk}, \binits{Y.M.}}:
\batitle{Gradient-based dimension reduction of multivariate vector-valued
  functions}.
\bjtitle{SIAM Journal on Scientific Computing}
\bvolume{42}(\bissue{1}),
\bfpage{534}--\blpage{558}
(\byear{2020})
\end{barticle}
\endbibitem

\bibitem{ManzoniNegriQuarteroni2016}
\begin{barticle}
\bauthor{\bsnm{Manzoni}, \binits{A.}},
\bauthor{\bsnm{Negri}, \binits{F.}},
\bauthor{\bsnm{Quarteroni}, \binits{A.}}:
\batitle{Dimensionality reduction of parameter-dependent problems through
  proper orthogonal decomposition}.
\bjtitle{Annals of Mathematical Sciences and Applications}
\bvolume{1}(\bissue{2}),
\bfpage{341}--\blpage{377}
(\byear{2016})
\end{barticle}
\endbibitem

\bibitem{QuarteroniManzoniNegri2015}
\begin{bbook}
\bauthor{\bsnm{Quarteroni}, \binits{A.}},
\bauthor{\bsnm{Manzoni}, \binits{A.}},
\bauthor{\bsnm{Negri}, \binits{F.}}:
\bbtitle{Reduced Basis Methods for Partial Differential Equations: An
  Introduction}
vol. \bseriesno{92}.
\bpublisher{Springer}, \blocation{???}
(\byear{2015})
\end{bbook}
\endbibitem

\bibitem{Hughes1968}
\begin{barticle}
\bauthor{\bsnm{Hughes}, \binits{G.}}:
\batitle{On the mean accuracy of statistical pattern recognizers}.
\bjtitle{IEEE transactions on information theory}
\bvolume{14}(\bissue{1}),
\bfpage{55}--\blpage{63}
(\byear{1968})
\end{barticle}
\endbibitem

\bibitem{LiLinShen22}
\begin{botherref}
\oauthor{\bsnm{Li}, \binits{Q.}},
\oauthor{\bsnm{Lin}, \binits{T.}},
\oauthor{\bsnm{Shen}, \binits{Z.}}:
Deep learning via dynamical systems: An approximation perspective.
Journal of the European Mathematical Society
(2022)
\end{botherref}
\endbibitem

\bibitem{OLearyRoseberryAlgerGhattas2020}
\begin{botherref}
\oauthor{\bsnm{O'Leary-Roseberry}, \binits{T.}},
\oauthor{\bsnm{Alger}, \binits{N.}},
\oauthor{\bsnm{Ghattas}, \binits{O.}}:
Low rank saddle free {N}ewton: A scalable method for stochastic nonconvex
  optimization.
arXiv preprint arXiv:2002.02881
(2020)
\end{botherref}
\endbibitem

\bibitem{JagalurMohanMarzouk21}
\begin{barticle}
\bauthor{\bsnm{Jagalur-Mohan}, \binits{J.}},
\bauthor{\bsnm{Marzouk}, \binits{Y.}}:
\batitle{Batch greedy maximization of non-submodular functions: {G}uarantees
  and applications to experimental design}.
\bjtitle{Journal of Machine Learning Research}
\bvolume{22}(\bissue{252}),
\bfpage{1}--\blpage{62}
(\byear{2021})
\end{barticle}
\endbibitem

\bibitem{AlnaesBlechtaHakeEtAl2015}
\begin{botherref}
\oauthor{\bsnm{Aln{\ae}s}, \binits{M.S.}},
\oauthor{\bsnm{Blechta}, \binits{J.}},
\oauthor{\bsnm{Hake}, \binits{J.}},
\oauthor{\bsnm{Johansson}, \binits{A.}},
\oauthor{\bsnm{Kehlet}, \binits{B.}},
\oauthor{\bsnm{Logg}, \binits{A.}},
\oauthor{\bsnm{Richardson}, \binits{C.}},
\oauthor{\bsnm{Ring}, \binits{J.}},
\oauthor{\bsnm{Rognes}, \binits{M.E.}},
\oauthor{\bsnm{Wells}, \binits{G.N.}}:
The {FE}ni{CS} project version 1.5.
Archive of Numerical Software
\textbf{3}(100)
(2015).
\doiurl{10.11588/ans.2015.100.20553}
\end{botherref}
\endbibitem

\bibitem{hippyflow}
\begin{botherref}
\oauthor{\bsnm{O'Leary-Roseberry}, \binits{T.}},
\oauthor{\bsnm{Villa}, \binits{U.}}:
hippyflow: {D}imension reduced surrogate construction for parametric {PDE} maps
  in {P}ython
(2021).
\doiurl{10.5281/zenodo.4608729}
\end{botherref}
\endbibitem

\bibitem{VillaPetraGhattas20}
\begin{botherref}
\oauthor{\bsnm{Villa}, \binits{U.}},
\oauthor{\bsnm{Petra}, \binits{N.}},
\oauthor{\bsnm{Ghattas}, \binits{O.}}:
{hIPPYlib: An extensible software framework for large-scale inverse problems
  governed by PDEs; Part I: Deterministic inversion and linearized Bayesian
  inference}.
ACM Transactions on Mathematical Software
(2021)
\end{botherref}
\endbibitem

\bibitem{AbadiBarhanChenEtAl2016}
\begin{bchapter}
\bauthor{\bsnm{Abadi}, \binits{M.}},
\bauthor{\bsnm{Barham}, \binits{P.}},
\bauthor{\bsnm{Chen}, \binits{J.}},
\bauthor{\bsnm{Chen}, \binits{Z.}},
\bauthor{\bsnm{Davis}, \binits{A.}},
\bauthor{\bsnm{Dean}, \binits{J.}},
\bauthor{\bsnm{Devin}, \binits{M.}},
\bauthor{\bsnm{Ghemawat}, \binits{S.}},
\bauthor{\bsnm{Irving}, \binits{G.}},
\bauthor{\bsnm{Isard}, \binits{M.}}, \betal:
\bctitle{Tensorflow: A system for large-scale machine learning}.
In: \bbtitle{12th USENIX Symposium on Operating Systems Design and
  Implementation (OSDI 16)},
pp. \bfpage{265}--\blpage{283}
(\byear{2016})
\end{bchapter}
\endbibitem

\bibitem{KingmaBa2014}
\begin{botherref}
\oauthor{\bsnm{Kingma}, \binits{D.P.}},
\oauthor{\bsnm{Ba}, \binits{J.}}:
Adam: A method for stochastic optimization.
arXiv preprint arXiv:1412.6980
(2014)
\end{botherref}
\endbibitem

\bibitem{OLearyRoseberryAlgerGhattas2019}
\begin{botherref}
\oauthor{\bsnm{O'Leary-Roseberry}, \binits{T.}},
\oauthor{\bsnm{Alger}, \binits{N.}},
\oauthor{\bsnm{Ghattas}, \binits{O.}}:
Inexact {N}ewton methods for stochastic nonconvex optimization with
  applications to neural network training.
arXiv preprint arXiv:1905.06738
(2019)
\end{botherref}
\endbibitem

\end{thebibliography}


\end{document}